\newtheorem{thm}{Theorem}[section] 
\newtheorem{cor}[thm]{Corollary}
\newtheorem{lem}[thm]{Lemma}
\newtheorem{prop}[thm]{Proposition}
\theoremstyle{definition}
\newtheorem{example}[thm]{Example}
\theoremstyle{remark}
\newtheorem{rem}[thm]{Remark}
\numberwithin{equation}{section}
\newcommand{\Real}{\mathbb R}
\newcommand{\eps}{\varepsilon}
\newcommand{\F}{\mathcal{F}}
\renewcommand{\P}{\mathbb{P}}
\newcommand{\E}{\mathbb{E}}
\DeclareMathOperator*{\sign}{sign}
\renewcommand{\Re}{\mathrm{Re}}
\renewcommand{\Im}{\mathrm{Im}}
\def\XXint#1#2#3{{\setbox0=\hbox{$#1{#2#3}{\int}$}
     \vcenter{\hbox{$#2#3$}}\kern-.5\wd0}}
\newcommand{\doublewidetilde}[1]{{%
  \mathpalette\double@widetilde{#1}%
}}
\newcommand{\double@widetilde}[2]{%
  \sbox\z@{$\m@th#1\widetilde{#2}$}%
  \ht\z@=.9\ht\z@
  \widetilde{\box\z@}%
}
\let\@mkboth\@gobbletwo
\let\@oddhead\@empty
\let\@evenhead\@empty
\begin{document}

\title[The mixed fBm: a spectral take]
{Mixed fractional Brownian motion: a spectral take}

\author{P. Chigansky}%
\address{Department of Statistics,
The Hebrew University,
Mount Scopus, Jerusalem 91905,
Israel}
\email{pchiga@mscc.huji.ac.il}

\author{M. Kleptsyna}%
\address{Laboratoire de Statistique et Processus,
Universite du Maine,
France}
\email{marina.kleptsyna@univ-lemans.fr}

\author{D. Marushkevych}%
\address{Laboratoire de Statistique et Processus,
Universite du Maine,
France}
\email{dmytro.marushkevych.etu@univ-lemans.fr}

\thanks{P. Chigansky's research was funded by ISF  1383/18
grant}
%\subjclass{}%
\keywords{
spectral problem, 
Gaussian processes, 
small ball probabilities,
fractional Brownian motion
}%

\date{\today}%
%\dedicatory{}%
%\commby{}%
% ----------------------------------------------------------------
\begin{abstract}
This paper provides yet another look at the mixed fractional Brownian motion (fBm), 
this time, from the spectral perspective. We derive an approximation for the eigenvalues of its covariance operator,  
asymptotically accurate up to the second order. This in turn allows to compute the exact $L_2$-small ball probabilities, 
previously known only at logarithmic precision. The obtained expressions show an interesting stratification of scales, 
which occurs at certain values of the Hurst parameter of the fractional component.  
Some of them have been previously encountered in other problems involving such mixtures. 
\end{abstract}

\maketitle 

\section{Introduction}

Mixtures of stochastic processes can often have properties, different from the individual components.  
In this paper we revisit the {\em mixed}  fractional Brownian motion (fBm)
\begin{equation}\label{mfBm}
\widetilde B_t = B_t + B^H_t, \quad t\in[0,1]
\end{equation}
where $B_t$ and $B^H_t$ are independent standard and fractional Brownian motions, respectively. The latter is 
the centred Gaussian process with the covariance function 
$$
\E B^H_t B^H_s = \frac 1 2 \Big(t^{2H}+s^{2H}-|t-s|^{2H}\Big), 
$$
where $H\in (0,1)$ is a parameter, called the Hurst index. Introduced in \cite{Kol40}, \cite{MvN68},  nowadays the fBm takes central place in the study of heavy tailed distributions, 
self-similarity and long range dependence phenomenon \cite{EM02}, \cite{PT17}.

The mixture \eqref{mfBm} drew considerable attention, after some of its interesting properties have been discovered in \cite{Ch01} (see also 
\cite{BN03}, \cite{CCK}) and proved useful in mathematical finance \cite{Ch03}, \cite{BSV07} and statistical inference 
\cite{CKstat}, \cite{DMS14}. It turns out that
the process $\widetilde B$ is a semimartingale if and only if $H> \frac 34$, in which case it is measure equivalent to $B$;
for $H<\frac 1 4$ the mixed fBm is measure equivalent to $B^H$, see \cite{vZ07}.

Another interesting feature is revealed by its canonical representation from \cite{CCK},   based on the  
martingale  
$$
M_t = \E(B_t|\F^{\widetilde B}_t) = \int_0^t g(s,t)d\widetilde B_s, \quad t>0, 
$$
where the kernel $g(s,t)$ is obtained by solving certain Wiener-Hopf equation. 
%\begin{equation}\label{inteq}
%g(s,t) +  \int_0^t g(r,t)  c_H |r-s|^{2H-2} dr = 1, \quad 0\le s\le t\le 1,
%\end{equation}
%with the constant $c_H$. 
The limiting performance of statistical procedures in models involving mixed fBm is governed by the 
asymptotic behaviour of this equation as $t\to\infty$, see \cite{CKstat}. 
For $H>\frac 12$, under reparametrization $\eps:= t^{1-2H}$ it reduces to the singularly perturbed problem 
\begin{equation}\label{singpert}
\eps g_\eps(x) +\int_0^1 g_\eps(y) c_H |x-y|^{2H-2}dy = 1, \quad 0\le x\le 1.
\end{equation}

As $\eps\to 0$ its solution $g_\eps(x)$ converges to the solution $g_0(x)$ of the limit equation, obtained by 
setting $\eps:=0$ in \eqref{singpert}, with the following rate with respect to $L_2$-norm, see \cite{ChK}, 
$$
\|g_\eps-g_0\|_2 \sim   \begin{cases}
\eps^{\frac{1-H}{2H-1}} & H\in (\frac 2 3,1)\\
\eps \sqrt{\log \eps^{-1}} & H=\frac 2 3\\
\eps & H\in (\frac 1 2, \frac 2 3)
\end{cases}
$$
Here the convergence rate breaks down at the critical point $H=\frac 2 3$.
%Curiously, it also arises in the filtering problem of estimating $B^H_t$ given the ``white noise'' observations
%$$
%Y_t = \int_0^t B^H_s ds + B_t, \quad t\ge 0.
%$$
%The steady state filtering error, found in \cite{ChK}, 
%$$
%V_\infty(H):= \lim_{t\to\infty}\var (B^H_t|\F^Y_t) = \frac{\big(\sin (\pi H)\Gamma(2H+1)\big)^{\frac 1{2H+1}}}{\sin \frac \pi {2H+1}},
%$$
%is maximized at $H=\frac 23$.

The purpose of this paper is to demonstrate that the mixed fBm is also an interesting process from the spectral standpoint. 
Using the results from the general operator theory, see  \cite[Proposition 2.3]{NN04tpa}, the first order asymptotics of the 
eigenvalues of its covariance operator coincides with that of its ``slowest" component, while the other component acts as a perturbation. 
More precisely, for $H>\frac1 2$ its eigenvalues agree, to the first asymptotic order, with the standard Brownian motion, and, for $H<\frac 1 2$, with the fBm. It can hardly be expected that such separation is preserved on a finer asymptotic level; however, 
our main result shows that the two components do remain separated, albeit in a somewhat unexpected way,  see Theorem \ref{eig-thm} and Remark \ref{rem:2.2} below.

As an application of the spectral approximation derived in this paper, we consider the $L_2$-small ball probabilities problem 
of evaluating $\P(\|\widetilde B\|_2\le \eps)$ as $\eps\to 0$. 
We show that it exhibits a curious stratification of scales, which occurs at the following values of $H$ (see Theorem \ref{main-thm}):
$$
{\bf \frac 1 4}, \frac 1 3, \frac 3 8, ..., \frac 1 2, ..., \frac 5 8, {\bf \frac 2 3}, {\bf \frac 3 4}
$$
where the values, mentioned above, are emphasized in bold.

\section{Main results} 
Spectral theory of stochastic processes is a classical theme in probability and analysis. 
For a centred process $X=(X_t, t\in [0,T])$ with covariance $\E X_s X_t = K(s,t)$, the eigenproblem is 
to find the nontrivial solutions $(\lambda, \varphi)$ to the equation 
\begin{equation}\label{eigpr}
K \varphi - \lambda \varphi =0,
\end{equation}
where $K$ is the integral covariance operator 
$$
(K \varphi)(t) = \int_0^T K(s,t) \varphi(s)ds.
$$

For sufficiently regular kernels this problem has countable solutions $(\lambda_n, \varphi_n)_{n\in \mathbb{N}}$.
The ordered sequence of the eigenvalues $\lambda_n\in \Real_+$ converges to zero and the eigenfunctions $\varphi_n$ form 
an orthonormal basis in $L_2([0,T])$. The Karhunen-Lo\`{e}ve theorem asserts that $X$ can be expanded into series of the 
eigenfunctions 
$$
X_t = \sum_{n=1}^\infty Z_n \sqrt{\lambda_n} \varphi_n(t)
$$
where $Z_n$'s are uncorrelated zero mean random variables with unit variance.
This decomposition is  useful in many applications, if the eigenvalues and the eigenfunctions can be found in a closed form or,
at least, approximated to a sufficient degree of accuracy, see e.g. \cite{PT17}. 
There are only a few processes, however, for which the eigenproblem can be solved explicitly. 

\subsection{Eigenvalues of the mixed fBm}
 
One such process is the Brownian motion, for which a simple exact formula is long known:
\begin{equation}\label{eigBm}
\lambda_n = \frac 1 {(n-\frac 12 )^2\pi^2}\quad \text{and}\quad \varphi_n(t) = \sqrt{2} \sin \big((n-\tfrac 12 )\pi t\big).
\end{equation}
The eigenproblem for the fBm turns out to be much harder and it is unlikely to 
have any reasonably explicit solutions. Nevertheless, in this case the eigenvalues admit a fairly precise asymptotic 
approximation. Namely, the sequence of ``frequencies'' $\nu_n(H)$, defined by the relation
\begin{equation}\label{lambdaH}
\lambda_n(H) = \frac{\sin (\pi H)\Gamma(2H+1)}{\nu_n(H)^{2H+1}}
\end{equation}
has the asymptotics
\begin{equation}\label{nun}
\nu_n(H) = \Big(n-\frac 1 2\Big)\pi -   \frac{(H-\frac 12)^2 }{H+\frac 12}\frac \pi 2+ O(n^{-1}), \qquad n\to\infty. 
\end{equation}
The leading order term in \eqref{nun} was discovered in \cite{Br03a, Br03b} and, by different methods, in \cite{NN04tpa} and \cite{LP04};
the second term was recently obtained in \cite{ChK}, along with the following approximation for the eigenfunctions 
\begin{multline}\label{fBmeigfn}
\varphi_n(x)  
= 
 \sqrt 2 \sin\bigg( \nu_{n}(H) x+\frac 1 4 \frac{(H-\frac 1 2)(H-\frac 3 2)}{H+\frac 1 2}\bigg) \\
 -    \int_0^{\infty}    
\Big(
e^{-  x \nu_n(H) u} f_0(u) +
(-1)^{n}   e^{-  (1-x) \nu_n(H) u}f_1(u)\
\Big)du + O(n^{-1}),
\end{multline} 
where $f_0(u)$ and $f_1(u)$ are given by closed form formulas and the residual term is uniform over $x\in [0,1]$.

\medskip
The following result details the spectral asymptotics of the mixed fBm \eqref{mfBm}:

\begin{thm}\label{eig-thm}
Let $\widetilde \lambda_n$ be the ordered sequence of eigenvalues of the mixed fBm covariance operator. 
Then the unique roots $\widetilde\nu_n$ of the equations
\begin{equation}\label{mixed_lambdan}
\widetilde\lambda_n   = \frac {1}{\widetilde \nu_n^2} + \frac{\sin (\pi H)\Gamma(2H+1)}{\widetilde\nu_n^{2H+1}}, \qquad n=1,2,...
\end{equation}
satisfy 
\begin{equation}\label{mnun}
\widetilde\nu_n := \nu_n(\tfrac 1 2\wedge H)  + O\big(n^{-|2H-1|}\big), \qquad n\to \infty,
\end{equation}
where $\nu_n(\cdot)$ is defined in \eqref{nun}.
\end{thm}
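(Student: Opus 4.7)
The plan is to extend the analytic machinery of \cite{ChK}, developed for the pure fBm eigenproblem, to the mixed kernel $\widetilde K(s,t) = \min(s,t)+\frac 12(s^{2H}+t^{2H}-|t-s|^{2H})$. The guiding observation is that, viewed on the whole real line, the Fourier/Laplace symbol of the ``convolutional part'' of this kernel is precisely the right-hand side of \eqref{mixed_lambdan},
$$
F(\nu) = \frac{1}{\nu^{2}}+\frac{\sin(\pi H)\Gamma(2H+1)}{\nu^{2H+1}},
$$
where the first term comes from the Brownian component (whose covariance is the Green's function of $-d^2/dt^2$) and the second from the fractional component (whose symbol is the one encoded in \eqref{lambdaH}). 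Since $F$ is strictly decreasing on $(0,\infty)$, the root $\widetilde\nu_n$ of \eqref{mixed_lambdan} is uniquely defined by $\widetilde\lambda_n$, so the theorem is equivalent to an asymptotic statement about $\widetilde\lambda_n$ obtained by substituting \eqref{mnun} into \eqref{mixed_lambdan}.

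First I would apply the Laplace transform to the eigen\-equation $(\widetilde K\varphi)(t)=\widetilde\lambda\varphi(t)$, obtaining a functional equation for the one-sided transforms of $\varphi$ extended by zero outside $[0,1]$. Splitting the problem into two half-line problems centered at the endpoints $t=0$ and $t=1$, one arrives at a pair of Wiener--Hopf equations whose symbols are related to $F$. In the pure fBm case, the analogous analysis in \cite{ChK} produces the sinusoidal ansatz, the second-order phase shift in \eqref{nun}, and the boundary-layer correction in \eqref{fBmeigfn}. I would construct a completely analogous approximation for the mixed kernel, replacing the pure fBm symbol by $F(\nu)$; the extra Brownian term is meromorphic with a double pole at the origin and does not obstruct the factorization procedure.

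Second, matching the two boundary layers produces a quantization condition of the form
$$
\widetilde\nu + 2\theta(\widetilde\nu) = n\pi + o(1),
$$
where $\theta(\widetilde\nu)$ is the total phase shift extracted from the Wiener--Hopf factorization of $F$. The crux of the argument is to show that $\theta(\widetilde\nu)$ agrees, up to an error $O(\widetilde\nu^{-|2H-1|})$, with the phase shift of the dominant component, i.e.\ Bm for $H>\frac 12$ and fBm for $H<\frac 12$. This comparison is natural because the subordinate summand in $F(\nu)$ is smaller than the dominant one by exactly the factor $\nu^{-|2H-1|}$, so perturbing the dominant factor and extracting the phase shift produces a correction of the same order. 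Feeding this back into the quantization condition gives \eqref{mnun}.

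The main technical obstacle is the uniform control of the Wiener--Hopf factorization for the mixed symbol $F$, which carries two singularities of qualitatively different nature at $\nu=0$: a double pole (Brownian) and a branch point of non-integer order $2H+1$ (fractional). Separating these contributions cleanly, and proving that the subordinate one perturbs the factors of the dominant symbol by precisely $O(\widetilde\nu^{-|2H-1|})$ uniformly in $n$, is the technical heart of the argument and the place where the critical behaviour at $H=\frac12$ as well as the stratification visible in the list of exponents in the introduction naturally arises.
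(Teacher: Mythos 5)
Your proposal follows essentially the same route as the paper: reduce to a Laplace-transform/Wiener--Hopf (Riemann boundary value) problem with the structural symbol $\lambda+\nu^{-2}+\sin(\pi H)\Gamma(2H+1)\nu^{-2H-1}$, derive a quantization condition from the endpoint matching, and show that the phase correction deviates from that of the dominant component by $O(\nu^{-|2H-1|})$ -- precisely the content of the paper's Lemma \ref{lem2.5} and the subsequent enumeration lemma. The technical heart you single out (uniform control of the factorization so the subordinate term perturbs the phase by exactly the relative order $\nu^{-|2H-1|}$) is exactly what the paper establishes by the explicit integral estimates on $\theta$, $\arg\{X(\nu i)\}$ and $b_\alpha(\nu)$, so your plan is a faithful blueprint of the given proof.
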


\begin{rem} \label{rem:2.2}
This theorem reveals a curios feature in the spectral structure of  mixtures.
In the pure fractional case, the second order approximation for the frequencies $\nu_n$ in \eqref{nun}
furnishes an approximation for the eigenvalues $\lambda_n$ through \eqref{lambdaH}, precise up to the same, 
second order. In the mixed case, similar approximation for the frequencies $\widetilde \nu_n$ in \eqref{mnun} 
provides an approximation for the eigenvalues \eqref{mixed_lambdan}, accurate up to the {\em fourth} order:
for example, for $H>\frac 1 2$
$$
\widetilde \lambda_n = a_1(H) n^{-2} + a_2(H) n^{-2H-1} + a_3(H) n^{-3} + a_4(H) n^{-2H-2} + o(n^{-2H-2})
$$
where all the coefficients $a_j(H)$ can be computed exactly. 
\end{rem}

\begin{rem}
It can be shown that asymptotic behaviour of the eigenfunctions is also dominated by one of the components: 
for $H>\frac 1 2$, the first order asymptotics with respect to the uniform norm 
coincides with that of the standard Brownian motion \eqref{eigBm}, while for $H<\frac 1 2$, it agrees 
with the asymptotics \eqref{fBmeigfn} of the fBm. 
\end{rem}

\subsection{The small ball probabilities} 
The small ball probabilities problem is to 
find the asymptotics of 
\begin{equation}\label{P}
\P(\|X\|\le \eps),\quad   \eps\to 0,
\end{equation}
for a given process $X=(X_t, t\in [0,1])$ and a norm $\|\cdot\|$. 
It has been extensively studied in the past and was found to have deep connections to various topics in probability theory
and analysis, see \cite{LiS01}. The case of the Gaussian processes and the $L_2$-norm is the simplest, in which asymptotics of \eqref{P} is  determined by the eigenvalues $\lambda_n$ of the covariance operator, \cite{S74}.

The computations for concrete processes require a closed form formula or at 
least a sufficiently accurate approximation of the eigenvalues.
Typically,  the first order approximation of $\lambda_n$'s allows to compute the asymptotics of $\log \P(\|X\|_2\le \eps)$ and the second order 
suffices for finding the asymptotics of $\P(\|X\|_2\le \eps)$, exact up to a multiplicative ``distortion'' constant. 
For the fBm, formulas \eqref{lambdaH}-\eqref{nun} give 
$$
\P(\|B^H\|_2\le \eps) \sim \eps^{\gamma(H)} \exp \Big(-\beta(H) \eps^{-\frac 1 H}\Big), \quad \eps \to 0,
$$
where $f(\eps)\sim g(\eps)$ means that $\lim_{\eps\to 0}f(\eps)/g(\eps)$ is finite and nonzero.
The exponent $\beta(H)$ was derived first in \cite{Br03a, Br03b}:
\begin{equation}\label{beta0H}
\beta(H) = 
H
\left(
\frac{ \sin (\pi H)\Gamma(2H+1) }{  (2H+1)^{ 2H+1 }\left( \sin \frac \pi {2H+1} \right)^{ 2H+1 } }
\right)^{\frac 1 {2H}}
\end{equation} 
and the power $\gamma(H)$ was recently found in \cite{ChK}:
\begin{equation}\label{gammaH}
\gamma(H) = \frac 1{2H} \Big(\frac 5 4-H+H^2\Big).
\end{equation}

%For $H=\frac 1 2$ these formulas agree with the well known asymptotics for the Brownian motion: 
%$$
%\P(\|B\|_2\le \eps) \simeq \frac 4{\sqrt \pi}\eps \exp \Big(-\frac 1 8 \eps^{-2}\Big), \quad \eps \to 0.
%$$

The first order perturbation effect, mentioned in the Introduction, implies that the rough, logarithmic asymptotics of 
$L_2$-small ball probabilities for the mixed fBm coincides with either standard or fractional parts 
\cite{NN04tpa} (see also \cite{NN18},\cite{McKMM18}):
\begin{equation}\label{logP}
\log \P\big(\|\widetilde B\|_2 \le \eps\big) \simeq 
 \log \P\big(\|B^{H\wedge \frac 1 2}\|_2\le \eps\big), \quad \eps\to 0,
\end{equation}
where $f(\eps)\simeq g(\eps)$ means that for $\lim_{\eps\to 0}f(\eps)/g(\eps)= 1$.
The following theorem  shows that the exact asymptotics of the mixed fBm is 
more intricate than could have been expected in view of \eqref{logP}:

\begin{thm}\label{main-thm} For $H\in (0,1)\setminus \{\tfrac 1 2\}$ 
\begin{equation}\label{Peps}
\P\big(\|\widetilde B\|_2   \le \eps\big)   \sim     \eps^{ \gamma(H)\vee 1}\exp\bigg(
-\sum_{k=0}^{\big\lfloor \frac 1 {|2H-1|}\big\rfloor}\beta_{k}(H) \eps^{\frac{k|2H-1| -1}{1/2\wedge H}} 
 \bigg), \quad \eps \to 0,
\end{equation}
where $\gamma(H)$ is given by \eqref{gammaH} and $\beta_k(H)$ are positive constants, defined in 
Propositions \ref{main-thm-largeH} and \ref{main-thm-smallH} below.
\end{thm}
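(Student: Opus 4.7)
The plan is to combine the sharp eigenvalue expansion from Theorem~\ref{eig-thm} with the classical Laplace-transform / saddle-point machinery for $L_2$-small ball probabilities of Gaussian vectors. The analysis splits naturally into the cases $H>\tfrac 12$ and $H<\tfrac 12$, since the leading eigenvalue scale differs (Brownian-type $n^{-2}$ in the first case, fBm-type $n^{-2H-1}$ in the second); these are presumably handled by the two propositions referenced in the statement.

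Via the Karhunen--Lo\`eve expansion $\|\widetilde B\|_2^2=\sum_n \widetilde\lambda_n Z_n^2$ with i.i.d.\ $Z_n\sim N(0,1)$, the small ball probability is controlled by the cumulant function
$$
\Lambda(u) := \tfrac 1 2 \sum_{n=1}^\infty \log(1+2u\widetilde\lambda_n), \qquad u\to\infty,
$$
through the Sytaya--Lifshits saddle-point formula
$$
\P(\|\widetilde B\|_2\le \eps) \sim \frac{\exp\bigl(u\eps^2 - \Lambda(u)\bigr)}{u\sqrt{2\pi |\Lambda''(u)|}}, \qquad \Lambda'(u)=\eps^2.
$$
The main computational task is to obtain a multi-term expansion of $\Lambda(u)$ as $u\to\infty$. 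Substituting \eqref{mixed_lambdan} and \eqref{mnun} turns each $\widetilde\lambda_n$ into a finite joint series in $n^{-2}$ and $n^{-(2H+1)}$ with remainder controlled by the precision of Theorem~\ref{eig-thm}. A term-by-term application of Euler--Maclaurin (or Poisson summation), with the resulting integrals reduced via Beta/Gamma identities, then should yield
$$
\Lambda(u)=\sum_{k=0}^{K} A_k(H)\,u^{\theta_k(H)} \;-\; B(H)\log u \;+\; C(H) \;+\; o(1),
$$
with $K=\lfloor 1/|2H-1|\rfloor$ and exponents $\theta_k(H)$ forming an arithmetic progression of step $|2H-1|/(2(1/2\wedge H)+1)$ downward from the leading scale $1/(2(1/2\wedge H)+1)$.

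Solving $\Lambda'(u)=\eps^2$ order-by-order gives $u=u(\eps)$ as a corresponding multi-scale expansion; inserting it back into $u\eps^2-\Lambda(u)$ and collecting terms produces precisely $\sum_{k=0}^{K}\beta_k(H)\eps^{(k|2H-1|-1)/(1/2\wedge H)}$ in the exponent of \eqref{Peps}. The denominator $u\sqrt{|\Lambda''(u)|}$, together with the logarithmic term $B(H)\log u$ coming out of $\Lambda$, supplies the polynomial prefactor $\eps^{\gamma(H)\vee 1}$: for $H<\tfrac 12$ this should match the known fBm exponent \eqref{gammaH}, while for $H>\tfrac 12$ it collapses to $\eps^1$, as for standard Brownian motion. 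The leading coefficient $\beta_0(H)$ should in particular reduce to \eqref{beta0H} in the fBm-dominated regime.

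The main obstacle is the bookkeeping of the interleaved scales $\{2,\,2H+1,\,3,\,2H+2,\ldots\}$ generated by the two summands in \eqref{mixed_lambdan}: they are separated by the irrational gap $|2H-1|$, and each correction must be tracked against the residual $O(n^{-|2H-1|})$ from \eqref{mnun}. This alignment is what forces the truncation at $K=\lfloor 1/|2H-1|\rfloor$ and explains the stratification of scales at the values of $H$ highlighted in the Introduction, since a new correction becomes visible each time $1/|2H-1|$ crosses an integer. Propagating this multi-scale expansion through the nonlinear saddle-point inversion while keeping the multiplicative constants $\beta_k(H)$ explicit is lengthy but should reduce to standard manipulations with Gamma and zeta values.
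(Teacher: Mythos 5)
Your proposal follows essentially the same route as the paper: the second-order eigenvalue asymptotics of Theorem~\ref{eig-thm} is fed into the Laplace-transform/saddle-point small-deviation machinery (the paper uses Corollary 3.2 of \cite{DLL96}, the rigorous form of the Sytaya--Lifshits formula you quote, with an \emph{approximate} saddle point $u(r)$ solving a truncated equation), and the same multi-scale expansion of the cumulant with step $|2H-1|$ per unit of $1/(2(\tfrac12\wedge H)+1)$ produces the truncation at $\lfloor 1/|2H-1|\rfloor$ and the prefactor $\eps^{\gamma(H)\vee 1}$, exactly as in Propositions~\ref{main-thm-largeH} and \ref{main-thm-smallH}. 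The one device you gloss over is the disposal of the $O(n^{-|2H-1|})$ remainder in $\widetilde\nu_n$: the paper first replaces $\widetilde\lambda_n$ by the pure power sum $\phi(n)=c_1n^{-d_1}+c_2n^{-d_2}+c_3n^{-d_3}$ via Li's comparison theorem \cite{Li92} (using $\sum_n|1-\widetilde\lambda_n/\phi(n)|<\infty$), which converts the residual into a \emph{convergent} multiplicative constant --- necessary because the claimed ``$\sim$'' means a finite nonzero limit of the ratio, and Euler--Maclaurin bookkeeping of the remainder alone only yields bounded, not convergent, corrections in the exponent.
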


\noindent 
A similar type of asymptotics, where additional terms join the sum in the exponent at certain values of a parameter, 
has been recently observed in \cite{R18} for certain Gaussian random fields.

\section{Proof of Theorem \ref{eig-thm}}

The proof uses the program of the spectral analysis for covariance operators, developed in \cite{ChK}, \cite{Ukai}.
It is based on the reduction of the eigenproblem to finding functions $(\Phi_0, \Phi_1)$,
sectionally holomorphic on $\mathbb{C}\setminus \Real_+$, which satisfy 

\medskip

\begin{enumerate}\addtolength{\itemsep}{0.25\baselineskip}
\renewcommand{\theenumi}{\alph{enumi}}

\item {\em a priori} growth estimates at the origin;

\item\label{cond_c} constraints on their values at certain points on the imaginary axis;

\item boundary conditions on the real semi-axis $\Real_+$;

\end{enumerate}

\medskip
\noindent
and behave as polynomials at infinity. The coefficients of these polynomials are also determined   
by condition \eqref{cond_c}.  The asymptotics in the spectral problem is derived by analysis of these two functions.

Implementation of this program uses the technique of solving the Riemann boundary value problem, see \cite{Gahov}.
We will detail its main steps, referring the reader to the relevant parts in \cite{ChK}, whenever calculations are similar.

\subsection{An equivalent generalized spectral problem}

Our starting point is to rewrite \eqref{eigpr} with the kernel 
\begin{equation}\label{Ktilde}
\widetilde K(s,t) = s\wedge t + \frac {1} 2 \Big(s^{2-\alpha}+t^{2-\alpha}-|t-s|^{2-\alpha}\Big), \quad s,t\in [0,1],
\end{equation}
where we defined  $\alpha:=2-2H\in (0,2)$, as the generalized spectral eigenproblem 
\begin{equation} \label{geneig}
\begin{aligned}
&   (1-\tfrac\alpha 2) \frac d {dx} \int_0^1  |x-y|^{1-\alpha}\sign(x-y) \psi(y)dy = -\lambda \psi''(x)-\psi(x),   \\
& \psi(1)=0, \, \psi'(0)=0,
\end{aligned}
\end{equation}
for $\psi(x) := \int_x^1 \varphi(y)dy$. The advantage of looking at the problem in this form is that it involves 
a simpler {\em difference} kernel. The proof of \eqref{geneig} amounts to taking derivatives of \eqref{eigpr} 
and rearranging, see  \cite[Lemma 5.1]{ChK}.

\subsection{The Laplace transform}

The principal stage of the proof is to derive an expression for the Laplace transform  of a solution $\psi$ to \eqref{geneig}:
$$
\widehat \psi(z) = \int_0^1 e^{-zx} \psi(x)dx, \quad z\in \mathbb{C},
$$
which has removable singularities: 

\begin{lem}\label{lem:3.1}
Define the structural function of the problem \eqref{geneig}
$$
\Lambda(z) =   \frac{ \Gamma(\alpha)}{|c_\alpha|}
\Big(  
\lambda    + \frac 1 {z^2} + \kappa_\alpha z^{\alpha-3}  e^{\pm \frac{1-\alpha} 2\pi i} 
\Big), \quad z\in \mathbb{C}\setminus \Real
$$
where $c_\alpha = (1-\frac  \alpha 2)(1-\alpha)$ and 
$
\kappa_\alpha := \dfrac{c_\alpha}{\Gamma(\alpha)}\dfrac{\pi}{\cos \frac \pi 2 \alpha}
$
and the signs correspond to $\Im\{z\}>0$ and $\Im\{z\}<0$ respectively. Then the Laplace transform 
can be expressed as  
$$
\widehat \psi(z) = \frac{\Phi_0(z)+e^{-z} \Phi_1(-z)}{z^2 \Lambda(z)}
$$
where $(\Phi_0, \Phi_1)$ are functions, sectionally holomorphic on $\mathbb{C}\setminus \Real_+$, such that 
\begin{equation}\label{near_zero_est}
\Phi_j(z) = \begin{cases}
O(z^{\alpha-1}) & \alpha <1 \\
O(1) & \alpha >1
\end{cases} \quad \text{as}\ z\to 0 \quad \text{for\ } j=0,1
\end{equation}
and  
\begin{equation}\label{near_inf_est}
\begin{aligned}
&
\Phi_0 (z) = -2C_2 z+\begin{cases}
O(z^{-1}) & \alpha<1 \\
O(z^{\alpha-2}) & \alpha>1
\end{cases} \\
&
\Phi_1 (z) = -2C_1+\begin{cases}
O(z^{-1}) & \alpha<1 \\
O(z^{\alpha-2}) & \alpha>1
\end{cases}
\end{aligned}\qquad \text{as\ } z\to\infty
\end{equation}
where $C_1$ and $C_2$ are constants.

\end{lem}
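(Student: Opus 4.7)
The strategy is to apply the Laplace transform directly to the integro-differential equation \eqref{geneig} and rearrange into the claimed form, tracking carefully the boundary contributions that arise from the finite integration range, following the template developed in \cite{ChK, Ukai}.

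I would first differentiate the fractional kernel in \eqref{geneig} to rewrite the left-hand side as $c_\alpha \int_0^1 |x-y|^{-\alpha}\psi(y)\,dy$ in principal-value sense, with $c_\alpha=(1-\alpha/2)(1-\alpha)$ as in the statement. Extending $\psi$ by zero outside $[0,1]$ and transforming term by term: the summand $\psi(x)$ contributes $\widehat\psi(z)$; the summand $\lambda\psi''(x)$ yields $\lambda z^2\widehat\psi(z)$ plus boundary contributions at $x=0$ and $x=1$ that, using $\psi'(0)=0$ and $\psi(1)=0$, simplify to $\lambda$ times a linear polynomial in $z$ with coefficients $\psi(0)$ and $\psi'(1)$; and the convolution with $|t|^{-\alpha}$, interpreted distributionally on $\Real$, contributes
$$
2\Gamma(1-\alpha)\,z^{\alpha-1}e^{\pm i(1-\alpha)\pi/2}\,\widehat\psi(z)
$$
off the real axis (the sign determined by the half-plane), plus explicit boundary remainders produced by truncating the convolution back to $[0,1]$. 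Collecting terms, using the reflection formula $\Gamma(\alpha)\Gamma(1-\alpha)=\pi/\sin(\pi\alpha)$ to identify $\kappa_\alpha$, and dividing through by $|c_\alpha|/\Gamma(\alpha)$ produces the relation
$$
z^2\,\Lambda(z)\,\widehat\psi(z) \;=\; \Phi_0(z) + e^{-z}\,\Phi_1(-z),
$$
with $\Phi_0(z)$ consolidating all boundary contributions localized at $x=0$ and $e^{-z}\Phi_1(-z)$ those localized at $x=1$, the exponential factor and argument flip encoding the substitution $x\mapsto 1-x$.

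Sectional holomorphy of each $\Phi_j$ on $\mathbb{C}\setminus\Real_+$ follows because each is built from explicit polynomials in $z$ and from terms of the form $z^{\alpha-k}$ multiplied by boundary values and by finite-interval moments of $\psi$ against the kernel; the only branch cut is that of $z^{\alpha-1}$ along $\Real_+$. The growth estimates \eqref{near_inf_est} at infinity come from matching the highest-order boundary data: the constants $C_1$ and $C_2$ turn out to be essentially $\psi(0)$ and $\psi'(1)$ up to the prefactor, and the subsequent order of the remainder reflects the second-highest boundary contribution. The estimates \eqref{near_zero_est} near the origin are inherited from the a priori regularity of $\psi$ at the endpoints of $[0,1]$: solutions of \eqref{geneig} behave like $x^{\min(1,\alpha)}$ near $0$ and $(1-x)^{\min(1,\alpha)}$ near $1$, so Watson-type lemmas applied to the boundary moments yield the dichotomy $\alpha<1$ vs $\alpha>1$.

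The principal obstacle is the careful bookkeeping across the branch cut of $z^{\alpha-1}$ and the correct allocation of every truncation boundary term either to $\Phi_0$ or to $e^{-z}\Phi_1(-z)$: only this particular splitting renders both $\Phi_j$ sectionally holomorphic with the stated polynomial growth, and uniqueness of the representation is forced by requiring no ``leftover'' singularities in $\mathbb{C}\setminus\Real_+$. The $\lambda$ term plays no qualitative role in this analysis --- it enters $\Lambda(z)$ as an additive constant --- and the entire scheme is a direct adaptation of the corresponding step for the pure fBm carried out in \cite{ChK}, the novelty here being only that $\lambda/z^2$ and the fractional tail $\kappa_\alpha z^{\alpha-3}e^{\pm(1-\alpha)\pi i/2}$ now sit side by side inside $\Lambda(z)$.
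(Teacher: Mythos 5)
Your proposal follows essentially the same route as the paper, which simply adapts Lemma 5.1 of \cite{ChK}: Laplace-transform the generalized problem \eqref{geneig}, use the off-axis transform of the kernel $|t|^{-\alpha}$ to produce the structural function $\Lambda(z)$, and collect the truncation and boundary terms (with $C_1,C_2$ proportional to $\psi'(1)$ and $\psi(0)$) into the sectionally holomorphic pair $(\Phi_0,\Phi_1)$ whose growth yields \eqref{near_zero_est}--\eqref{near_inf_est}. The only slip is the constant in the symbol: adding the two one-sided kernel transforms gives $\Gamma(1-\alpha)\bigl(z^{\alpha-1}+(-z)^{\alpha-1}\bigr)=2\Gamma(1-\alpha)\cos\tfrac{(1-\alpha)\pi}{2}\,z^{\alpha-1}e^{\pm i(1-\alpha)\pi/2}$ rather than $2\Gamma(1-\alpha)\,z^{\alpha-1}e^{\pm i(1-\alpha)\pi/2}$; with the cosine factor included, the reflection formula does produce exactly $\kappa_\alpha$.
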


The proof of this lemma is close to  \cite[Lemma 5.1]{ChK}. 
\subsection{Reduction to integro-algebraic system}
The function $\Lambda(z)$ has zeros at $\pm i\nu$ where $\nu\in \Real_+$ solves 
the equation 
\begin{equation}\label{lambdanu}
\lambda =  \nu^{-2}      +    \kappa_\alpha  \nu^{ \alpha-3}. 
\end{equation}
This defines the one-to-one correspondence between $\lambda$ and $\nu$, cf. \eqref{mixed_lambdan}, with $\nu$ playing the role of a large 
parameter in what follows. Since $\Lambda(\pm i\nu)=0$ and $\widehat \psi(z)$ must be analytic, we obtain the algebraic condition 
\begin{equation}\label{poles}
e^{-i\nu }\Phi_1(-i\nu )+\Phi_0(i\nu )=0.
\end{equation}

Also $\Lambda(z)$ is discontinuous on $\Real_+$. 
Therefore continuity of $\widehat \psi(z)$ on  $\mathbb{C}$ gives the boundary conditions, which bind together the limits 
$\Phi_j^\pm := \lim_{z\to t^\pm} \Phi_j(z)$ as $z$ tends to $t\in \Real$ in the lower and upper half planes. 
In the vector form (see \cite[Section 5.1.2 ]{ChK}) 
\begin{equation}\label{bnd}
\Phi^+(t) - e^{i\theta(t)}\Phi^-(t) = 2i\sin \theta(t) e^{-t} J \Phi(-t), \quad t\in \Real_+,
\end{equation}
where $J=\left(\begin{smallmatrix}
0 & 1\\
1 & 0
\end{smallmatrix}\right)
$
and $\theta(t)=\arg\{\Lambda^+(t)\}$ with the $\arg\{\cdot\}$ branch chosen so that $\theta(t)$ is continuous and vanishes as $t\to\infty$, that is
\begin{equation}\label{thetat}
\theta(t)  =  
\arctan \frac
{  
\sin\frac {1-\alpha}2\pi  
}
{
  \kappa_\alpha^{-1} \nu^{1-\alpha}  \Big((t/\nu)^{3-\alpha}   +  (t/\nu)^{1-\alpha}\Big) +   (t/\nu)^{3-\alpha}  +  \cos \frac {1- \alpha}2\pi  
},\quad t> 0.
\end{equation} 
Now, by Lemma \ref{lem:3.1}, the problem \eqref{eigpr} reduces to finding sectionally holomorphic functions $(\Phi_0, \Phi_1)$,
growing as in \eqref{near_zero_est}-\eqref{near_inf_est}, which comply with constraint \eqref{poles}
and satisfy the boundary condition \eqref{bnd}.

In general such Riemann problem for a pair of functions may not have an explicit solution, but the system \eqref{bnd} can be  decoupled,  see  \cite[eq. (5.35)]{ChK} and consequently $\Phi_0(z)$ and $\Phi_1(z)$,
satisfying the conditions \eqref{near_zero_est}-\eqref{near_inf_est}, can be expressed in terms of 
solutions to certain integral equations, see \eqref{pqeq} below. 
More precisely, define sectionally holomorphic  function 
\begin{equation}\label{Xzdef}
X(z) = \exp \left(\frac 1 \pi \int_0^\infty \frac{\theta(\tau)}{\tau-z}d\tau\right),
\end{equation}
and the real valued function 
$$
h(t)  = e^{i\theta(t)}\sin\theta(t) X(-t)/X^+(t), \quad t\in \Real_+.
$$

\begin{lem}
\

\medskip
1. For all  $\nu>0$ large enough,  the integral equations 
\begin{equation}\label{pqeq}
\begin{aligned}
q_\pm(t) = & \pm \frac 1{ \pi } \int_0^\infty \frac{   h(\nu \tau) e^{-\nu \tau}}{\tau+t}q_\pm(\tau)d\tau +t \\
p_\pm(t) = & \pm  \frac {1}{ \pi  } \int_0^\infty \frac{   h(\nu \tau) e^{- \nu \tau}}{\tau +t }p_\pm(\tau)d\tau    +1 
\end{aligned} \qquad t>0,
\end{equation}
have unique solution such that $p_\pm (t)-1$ and $q_\pm (t)-t$ are square integrable on $\Real_+$.

\medskip 
2. The solutions to the Riemann problem with boundary conditions \eqref{bnd} and growth as in \eqref{near_zero_est}-\eqref{near_inf_est}
has the form 
\begin{equation}\label{Phi}
\Phi(z) = X(z) A(z/\nu) \begin{pmatrix}
C_1 \\
C_2
\end{pmatrix}
\end{equation}
where
\begin{equation}\label{apmbpm}
A(z) = \begin{pmatrix}
-a_-(z)  &b_\alpha(\nu) a_+(z) -\nu b_+ (z) \\
-a_+(z) & b_\alpha(\nu) a_-(z)-\nu b_-(z)
\end{pmatrix}\quad \text{with} \quad
\begin{aligned}
a_\pm (z) = & p_+(z)\pm p_-(z)\\
b_\pm (z) = &q_+(z)\pm q_-(z)
\end{aligned}
\end{equation}
and 
\begin{equation}\label{beta_alpha}
b_\alpha(\nu) := \frac 1 \pi \int_0^\infty \theta(\nu\tau)d\tau.
\end{equation}
\end{lem}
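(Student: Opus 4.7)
The plan is to implement the Gakhov--Muskhelishvili method for vector Riemann boundary value problems: first factorize the scalar jump coefficient $e^{i\theta(t)}$ through the function $X(z)$; then decouple the $2\times 2$ system via the eigenstructure of the involution $J$; and finally solve the resulting pair of scalar additive Riemann problems by Sokhotski--Plemelj, extracting \eqref{pqeq} as the closing equations on the boundary data. Part 2 is addressed first, since the integral equations of Part 1 arise in its construction.

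For Part 2, $X(z)$ of \eqref{Xzdef} is the canonical factorizer: Sokhotski--Plemelj applied to the Cauchy-type integral with continuous density $\theta$ that decays at infinity yields $X^+(t)/X^-(t) = e^{i\theta(t)}$ on $\Real_+$. Dividing \eqref{bnd} by $X^+(t)$ converts the multiplicative jump into the additive one
$$
\Phi^+(t)/X^+(t) - \Phi^-(t)/X^-(t) = 2i\sin\theta(t)\, e^{-t}\, X^+(t)^{-1}\, J\Phi(-t), \quad t>0,
$$
in which the unknown appears on the right only through $J\Phi(-t)$. Because $J^2 = I$ with eigenvalues $\pm 1$, the combinations $S_\pm := \Phi_0 \pm \Phi_1$ satisfy scalar additive Riemann problems whose data depend solely on $S_\pm(-t)$; this is the decoupling \cite[eq.~(5.35)]{ChK}. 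Each scalar problem has a unique sectionally holomorphic solution given by Sokhotski--Plemelj as a Cauchy integral over $\Real_+$ plus a polynomial whose degree is dictated by \eqref{near_inf_est}---linear in $z$, with the two undetermined coefficients identified with $C_1$ and $C_2$. Substituting this Cauchy representation back into its own right-hand side (now at $-t$) and rescaling by $\nu$ closes an integral system on the boundary data, which is precisely \eqref{pqeq} for $p_\pm$ and $q_\pm$; reassembling the Cauchy and polynomial pieces produces \eqref{Phi}--\eqref{apmbpm}, the constant $b_\alpha(\nu)$ of \eqref{beta_alpha} emerging from the leading-order expansion of $\log X(z)$ at $z \to \infty$.

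For Part 1, I would write \eqref{pqeq} as fixed-point equations $(\Id \mp T_\nu) u = u_0$ with
$$
(T_\nu f)(t) = \frac{1}{\pi}\int_0^\infty \frac{h(\nu\tau)\, e^{-\nu\tau}}{\tau + t}\, f(\tau)\, d\tau
$$
and inhomogeneity $u_0 \in \{1, t\}$. The Stieltjes kernel $(\tau + t)^{-1}$ is bounded on $L_2(\Real_+)$ with norm $\pi$ by Hilbert's inequality, and $h$ is bounded on $\Real_+$ uniformly in the large parameter, as follows from the explicit formulas for $\theta$ and $X$ together with an inspection of their behaviour at $0$ and $\infty$. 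The exponential damping $e^{-\nu\tau}$ then forces $\|T_\nu\|_{L_2 \to L_2} \to 0$ as $\nu \to \infty$, so $\Id \mp T_\nu$ is invertible on $L_2(\Real_+)$ by Neumann series for all sufficiently large $\nu$, delivering existence, uniqueness and the claimed square integrability of $p_\pm - 1$ and $q_\pm - t$. The main obstacle is that the affine inhomogeneities $1$ and $t$ do not themselves lie in $L_2(\Real_+)$, so before invoking the fixed-point argument one must verify that $T_\nu u_0 \in L_2(\Real_+)$ for these particular $u_0$; this reduces to a weighted estimate of the Stieltjes kernel in which the exponential factor supplies both the $L_2$-integrability at infinity and the smallness uniform in $\nu$ needed to close the iteration.
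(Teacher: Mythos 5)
Your overall route is the same one the paper takes (the paper itself only points to the calculations preceding (5.39) in \cite{ChK}): factorize with $X$, decouple via the eigenvectors of $J$ through $S_\pm=\Phi_0\pm\Phi_1$, represent each scalar problem by a Cauchy integral plus a linear polynomial fixed by \eqref{near_zero_est}--\eqref{near_inf_est}, and close the system on the boundary data to get \eqref{pqeq}. However, two of your concrete steps would not go through as written. First, with the definition \eqref{Xzdef} the Sokhotski--Plemelj jump of $\log X$ is $2i\theta(t)$, so $X^+(t)/X^-(t)=e^{2i\theta(t)}$, not $e^{i\theta(t)}$; dividing the relation $\Phi^+-e^{i\theta}\Phi^-=\dots$ by $X^+$ then leaves a residual multiplicative factor $e^{-i\theta}$ rather than an additive jump. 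The reduction must start from the boundary relation in the equivalent form $e^{-i\theta(t)}\Phi^+(t)-e^{i\theta(t)}\Phi^-(t)=2i\sin\theta(t)e^{-t}J\Phi(-t)$ (i.e.\ jump coefficient $e^{2i\theta}$ and right-hand side carrying an extra $e^{i\theta}$); only then does division by $X^+$ give $Y^+-Y^-=\pm 2i\,h(t)e^{-t}Y(-t)$ with the \emph{real} kernel $h(t)=e^{i\theta(t)}\sin\theta(t)X(-t)/X^+(t)$, which is exactly why $h$ is defined this way and why the $1/\pi$ normalization in \eqref{Xzdef} is the right one.

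Second, your justification of Part 1 contains a false claim: $\|T_\nu\|_{L_2\to L_2}$ does \emph{not} tend to $0$ as $\nu\to\infty$. After the substitution $s=\nu\tau$ the multiplier bound $\sup_\tau|h(\nu\tau)e^{-\nu\tau}|=\sup_s|h(s)e^{-s}|$ is $\nu$-independent, and in fact the operator norm itself stays bounded away from zero: testing on $f=\sqrt{\nu}\,\mathbf{1}_{[1/\nu,2/\nu]}$ gives $\|T_\nu f\|_2\ge c>0$ uniformly in $\nu$ whenever $h$ does not vanish on $[1,2]$. What is true, and all that is needed, is that $\|T_\nu\|\le\rho<1$ for all large $\nu$ (this is the content of \cite[Lemma 5.6]{ChK}), and proving it requires the specific size and decay structure of $h$ (through $\theta$ and the ratio $X(-t)/X^+(t)$), not merely boundedness of $h$ plus the exponential factor; the $O(1/\nu)$ smallness that drives the later asymptotics enters only afterwards, in estimates such as \eqref{abpmest}, where the Cauchy integrals are evaluated at distance of order one from $\Real_+$ against the weight $e^{-\nu\tau}$. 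Your remark that the affine inhomogeneities $1$ and $t$ must first be shown to be mapped into $L_2(\Real_+)$ is correct and is handled exactly by that exponential weight. With these two repairs your argument coincides with the paper's (cited) proof.
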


\medskip
This lemma is checked exactly as in \cite{ChK}, see calculations preceding (5.39) therein.
At this stage, the eigenproblem \eqref{eigpr} is equivalent to finding all $\nu>0$, for which there exists a nonzero vector 
$\left(\begin{smallmatrix}
C_1\\
C_2
\end{smallmatrix}\right)
$
satisfying \eqref{poles} with $\Phi$ defined by \eqref{Phi}. Obviously, the condition \eqref{poles} reads 
\begin{multline*}
 e^{i\nu/2} X(i\nu) \Big(-a_-(-i)C_1 + \big(b_\alpha(\nu) a_+(-i)-b_+(-i)\big)C_2\Big)+ \\
 e^{-i\nu/2}X(-i\nu) \Big(-a_+(i) C_1 + \big(b_\alpha(\nu) a_-(i)-b_-(i)\big)C_2\Big)=0,
\end{multline*}
or, after a rearrangement, $\eta(\nu) C_1 + \xi(\nu) C_2 =0$, where 
\begin{equation}\label{xi_eta}
\begin{aligned}
\xi(\nu) :=\; &   e^{i\nu/2}  X(\nu i)\Big(  b_+(-i)  - b_\alpha(\nu) a_+(-i) \Big)  
+ e^{-i\nu/2}X(-\nu i)\Big( b_-(i)-  b_\alpha(\nu) a_-(i) \Big)   \\
\eta(\nu) :=\; & e^{i\nu/2} X(\nu i) a_-(-i)+ e^{-i\nu/2} X(-\nu i) a_+(i).  
\end{aligned}
\end{equation}
Since $C_1$ and $C_2$ are real valued, nontrivial solutions are possible if and only if 
\begin{equation}\label{alg}
 \eta(\nu)\overline{\xi(\nu)}-\overline{\eta(\nu)} \xi(\nu) =0.
\end{equation}

To recap,  any solution $(\nu,\, p_\pm, q_\pm)$ to the system of algebraic and integral equations 
\eqref{alg} and \eqref{pqeq}, can be used to construct a triplet $(\nu, \Phi_0,\Phi_1)$, mentioned above, and 
consequently, a solution $(\lambda, \varphi)$  to the eigenproblem \eqref{eigpr}. 
In particular, all eigenvalues can be found  by solving this equivalent problem. 

\subsection{Asymptotic analysis}
While the equivalent problem, presented in the previous section, does not appear simpler than the initial eigenproblem, 
it happens to be more accessible asymptotically as $\nu\to \infty$. For all $\nu$ large enough, the system of 
equations \eqref{lambdanu} and \eqref{pqeq} is solved by the fixed point iterations. This in turn implies that 
it has countably many solutions, which under a suitable enumeration admit asymptotically exact approximation as the enumeration 
index tends to infinity.  

Let us describe this approximation in greater detail.
It can be shown as in \cite[Lemma 5.6]{ChK} that the integral operator in the right hand side of \eqref{pqeq} is a 
contraction on $L_2(\Real_+)$, at least for all $\nu$ large enough. It also follows that 
the functions in \eqref{apmbpm} satisfy the estimates 
\begin{equation}\label{abpmest}
\begin{aligned}
&
 |a_+(\pm i) -2|\vee |a_-(\pm i)|\le C /\nu   \\  
&
 |b_+(\pm i) \mp 2i| \vee |b_-(\pm i)| \le C / {\nu^{2}} 
\end{aligned}
\end{equation}
with a constant $C$, which depends only on $\alpha$, in a certain uniform way as in Lemma \ref{lem2.5} below
(see also \cite[Lemm 5.7]{ChK}).

To proceed we need the following additional estimates, specific to mixed covariance \eqref{Ktilde}: 

\begin{lem}\label{lem2.5}
\

\medskip 
\noindent
a) 
For any $\alpha_0 \in (0,1)$, there are constants $C_0$ and $\nu_0$ such that for all $\nu\ge \nu_0$
and $\alpha\in [\alpha_0, 1]$ the functions defined in \eqref{Xzdef} and \eqref{beta_alpha} satisfy the bounds 
$$
\Big|\arg\{X(\nu i)\}\Big|\le (1-\alpha)C_0 \nu^{\alpha-1}\quad \text{and}\quad \Big| |X(\nu i) |-1\Big| \le (1-\alpha)C_0 \nu^{\alpha-1}
$$
and 
$$
b_\alpha(\nu) \le (1-\alpha)C_0\nu^{\alpha-1}.
$$

\medskip 
\noindent
b)  For any $\alpha_0 \in (1,2)$,  there are constants $C_0$ and $\nu_0$ such that for all $\nu\ge \nu_0$
and $\alpha\in [1, \alpha_0]$ the functions defined in \eqref{Xzdef} and \eqref{beta_alpha} satisfy the bounds
\begin{equation}\label{Xcargabs}
\Big|\arg\{X_c(\nu i)\} - \tfrac{1-\alpha}{8}\pi\Big| \le C_0 \nu^{1-\alpha}\quad \text{and} \quad 
\Big| |X_c(\nu i)|- \sqrt{\tfrac {3-\alpha} 2}\Big| \le C_0 \nu^{1-\alpha}
\end{equation}
and 
\begin{equation}\label{theta_m1}
\left|b_\alpha(\nu) - b_\alpha\right|\le C_0 \nu^{1-\alpha}\quad \text{with\ } b_\alpha = \frac{\sin \Big(\frac \pi {3-\alpha}\frac {1-\alpha} 2\Big)}{\sin \frac \pi {3-\alpha}}.
\end{equation}

\end{lem}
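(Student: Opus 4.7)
The plan is to exploit the explicit formula \eqref{thetat} for $\theta(t)$ together with the representations
$$
\arg X(\nu i)=\frac{1}{\pi}\int_0^\infty\frac{\theta(\nu s)}{s^2+1}\,ds,\qquad \log|X(\nu i)|=\frac{1}{\pi}\int_0^\infty\frac{s\,\theta(\nu s)}{s^2+1}\,ds,\qquad b_\alpha(\nu)=\frac{1}{\pi}\int_0^\infty\theta(\nu s)\,ds,
$$
obtained by splitting real and imaginary parts in \eqref{Xzdef} at $z=\nu i$ and substituting $\tau=\nu s$. In this rescaled variable the denominator of the $\arctan$ in \eqref{thetat} becomes $\kappa_\alpha^{-1}\nu^{1-\alpha}(s^{3-\alpha}+s^{1-\alpha})+s^{3-\alpha}+\cos\tfrac{(1-\alpha)\pi}{2}$; the two cases of the lemma correspond to which of the two additive parts dominates for large $\nu$.

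For part (a), $\alpha\le 1$ so $\nu^{1-\alpha}\ge 1$ and the first part dominates, while simultaneously $\sin\tfrac{(1-\alpha)\pi}{2}=O(1-\alpha)$ uniformly for $\alpha\in[\alpha_0,1]$. Bounding the $\arctan$ by its argument yields the uniform pointwise estimate
$$
|\theta(\nu s)|\le C(1-\alpha)\,\nu^{\alpha-1}\,g(s),\qquad g(s):=\frac{1}{s^{3-\alpha}+s^{1-\alpha}},
$$
and since $g(s)\sim s^{\alpha-1}$ near $0$ and $g(s)\sim s^{\alpha-3}$ near $\infty$, the function $g$ is integrable against each of the three weights $1$, $(s^2+1)^{-1}$, $s(s^2+1)^{-1}$, uniformly in $\alpha\in[\alpha_0,1]$ (the constraint $\alpha_0>0$ handles the singularity at $s=0$). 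The three bounds of (a) then follow by direct integration; the passage from $\log|X(\nu i)|$ to $\bigl||X(\nu i)|-1\bigr|$ uses $|e^x-1|\le e^{|x|}|x|$.

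For part (b), $\alpha\ge 1$ so $\nu^{1-\alpha}\le 1$ and the second part dominates, bounded below by the positive constant $\cos\tfrac{(1-\alpha)\pi}{2}$. Consequently $\theta(\nu s)\to\theta_\infty(s):=\arctan\tfrac{\sin\varphi}{s^\beta+\cos\varphi}$ with $\varphi=(1-\alpha)\pi/2$, $\beta=3-\alpha$, and the Lipschitz bound $|\arctan x-\arctan y|\le|x-y|$ yields the remainder estimate $|\theta(\nu s)-\theta_\infty(s)|\le C\nu^{1-\alpha}\tilde g(s)$ with $\tilde g(s)=(s^{3-\alpha}+s^{1-\alpha})/(s^\beta+\cos\varphi)^2$ integrable against all three weights. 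The task reduces to the three explicit limits
$$
\frac{1}{\pi}\int_0^\infty\theta_\infty(s)\,ds=b_\alpha,\qquad \frac{1}{\pi}\int_0^\infty\frac{\theta_\infty(s)}{s^2+1}\,ds=\frac{(1-\alpha)\pi}{8},\qquad \frac{1}{\pi}\int_0^\infty\frac{s\,\theta_\infty(s)}{s^2+1}\,ds=\frac{1}{2}\log\frac{3-\alpha}{2}.
$$
For the first, write $\theta_\infty(s)=\tfrac{1}{2i}\log\tfrac{s^\beta+e^{i\varphi}}{s^\beta+e^{-i\varphi}}$, substitute $u=s^\beta$, and apply the classical Mellin identity
$$
\int_0^\infty u^{r-1}\bigl[\log(u+a)-\log(u+b)\bigr]\,du=\frac{\pi(a^r-b^r)}{r\,\sin\pi r},\qquad 0<r<1,
$$
with $r=1/\beta$, $a=e^{i\varphi}$, $b=e^{-i\varphi}$; this yields directly $\sin(\varphi/\beta)/\sin(\pi/\beta)$, matching \eqref{theta_m1}. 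The two remaining identities are obtained by an analogous contour argument for $\log(z^\beta+e^{\pm i\varphi})/(z^2+1)$ over a sector in the upper half-plane, picking up the residue at $z=i$ together with the branch contribution from the unique zero of $z^\beta+e^{i\varphi}$ inside the sector.

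The main obstacle is the closed-form evaluation of those last two Mellin-type integrals, whose clean values $(1-\alpha)\pi/8$ and $\tfrac12\log\tfrac{3-\alpha}{2}$ reflect rather special cancellations. They can be cross-checked at $\alpha=1$ trivially (both integrands vanish since $\theta_\infty\equiv 0$) and at $\alpha=2$ through $\arctan s+\arctan(1/s)=\pi/2$ together with $\int_0^\infty\log(1+s^2)/(1+s^2)\,ds=\pi\log 2$, yielding $-\pi/8$ and $-\tfrac12\log 2$, consistently with $\sqrt{(3-2)/2}=2^{-1/2}$. Once the limiting constants are identified, the pointwise remainder bounds from the previous step promote them, by dominated convergence, to the quantitative rate $O(\nu^{1-\alpha})$, completing both parts.
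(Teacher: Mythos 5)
Your handling of the quantitative estimates is essentially the paper's own argument: part (a) is exactly the paper's proof (rescale $\tau=\nu s$, bound the arctangent in \eqref{thetat} by its argument, use $\sin\tfrac{(1-\alpha)\pi}{2}=O(1-\alpha)$ together with the uniform integrability of $(s^{3-\alpha}+s^{1-\alpha})^{-1}$ against the three weights $1$, $(s^2+1)^{-1}$, $s(s^2+1)^{-1}$), and in part (b) your Lipschitz comparison of $\theta(\nu s)$ with $\theta_\infty(s)=\theta_0(s)$ is the same comparison the paper makes in \eqref{int_theta_bnd}, applied to all three integrals; the uniformity in $\alpha\in[1,\alpha_0]$ is fine since $\cos\varphi$ and $\kappa_\alpha$ stay bounded away from zero there. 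The genuine divergence is that the paper does not prove the limiting constants at all: it quotes $b_\alpha$ from \cite[Lemma 5.2]{ChK} and the values $\arg X_0(i)=\tfrac{1-\alpha}{8}\pi$, $|X_0(i)|=\sqrt{\tfrac{3-\alpha}{2}}$ from \cite[Lemma 5.5]{ChK}, whereas you attempt a self-contained derivation. Your Mellin computation giving $\tfrac1\pi\int_0^\infty\theta_\infty(s)\,ds=\sin(\varphi/\beta)/\sin(\pi/\beta)$ is complete and correct and matches \eqref{theta_m1}.

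The gap is in the two remaining identities, which you yourself flag as the main obstacle. The sketched contour argument --- ``residue at $z=i$ together with the branch contribution from the unique zero of $z^\beta+e^{i\varphi}$ inside the sector'' --- does not work as described, because that zero \emph{is} $z=i$: with $\beta=3-\alpha$ and $\varphi=\tfrac{(1-\alpha)\pi}{2}$ one has $i^{\beta}=e^{i\pi(3-\alpha)/2}=-e^{i\varphi}$. Hence the logarithmic branch point of $\log(z^{\beta}+e^{i\varphi})$ sits exactly on the simple pole of $1/(z^2+1)$; the residue there is not defined, and ``pole contribution plus branch-cut contribution'' is not a valid decomposition. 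This coincidence is precisely the special cancellation responsible for the clean values $\tfrac{(1-\alpha)\pi}{8}$ and $\tfrac12\log\tfrac{3-\alpha}{2}$, and extracting them requires real additional work (an indented contour around $z=i$ with the $\log(z-i)$ singularity treated explicitly, a differentiation-in-the-parameter argument, or simply the citation of \cite[Lemma 5.5]{ChK}, which is the route the paper takes). Your endpoint checks at $\alpha=1$ and $\alpha=2$ are reassuring but do not establish the identities on all of $[1,\alpha_0]$; until this evaluation is supplied, your proof of \eqref{Xcargabs}, and hence of part (b), is incomplete.
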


\begin{proof}\

\medskip 

a) 
Define $\Gamma(z) := \frac 1\pi \int_0^\infty \frac{\theta(\tau)}{\tau-z}d\tau$, then using the expression in \eqref{thetat}, 
positive for $\alpha\in (0,1)$, 
\begin{align*}
\big|\arg\{X(\nu i)\}\big| = 
& \big|\Im\{\Gamma(\nu i)\}\big| =\frac 1 \pi 
 \int_0^\infty \frac{\theta(\tau)}{\tau^2+\nu^2}\nu d\tau =
 \frac 1 \pi  \int_0^\infty \frac{\theta(\nu s)}{s^2+1}  ds  
\le \\
&
%\int_0^\infty 
%\frac 1 {s^2+1} 
%\arctan \frac
%{  
%\sin\frac {1-\alpha}2\pi  
%}
%{
%  \kappa_\alpha^{-1} \nu^{1-\alpha}  \big(s^{3-\alpha}   +  s^{1-\alpha}\big) +   s^{3-\alpha}  +  \cos \frac {1- \alpha}2\pi  
%}
%ds \le 
\frac {\sin \frac {1-\alpha}2\pi } {\kappa_\alpha^{-1} \nu^{1-\alpha}}
\int_0^\infty 
\frac 1 {s^2+1} 
 \frac
{  
1  
}
{
    \big(s^{3-\alpha}   +  s^{1-\alpha}\big)   
}
ds \le (1-\alpha)C_0 \nu^{\alpha-1}. 
\end{align*}
Similarly 
$$
\Re \{\Gamma(\nu i)\}  =  \frac 1\pi \int_0^\infty \frac{\tau}{\tau^2+\nu^2} \theta(\tau) d\tau \le (1-\alpha) C_0 \nu^{\alpha-1}
$$ 
and hence for all $\nu$ large enough, $\big||X(\nu i)|-1\big| = \big|e^{\Re \{\Gamma(i\nu)\}}-1\big|\le  (1-\alpha)C_0 \nu^{\alpha-1}$ as claimed. 
The bound for $b_\alpha$ is obtained  similarly 
$$
b_\alpha(\nu) =    \frac 1 \pi \int_0^\infty \theta(\nu \tau)d\tau 
%=
%\frac 1 \pi  \int_0^\infty 
%\arctan \frac
%{  
%\sin\frac {1-\alpha}2\pi  
%}
%{
%  \kappa_\alpha^{-1} \nu^{1-\alpha}  \big(\tau^{3-\alpha}   +  \tau^{1-\alpha}\big) +   \tau^{3-\alpha}  +  \cos \frac {1- \alpha}2\pi  
%}
%d\tau 
\le  \frac {\sin \frac {1-\alpha}2} {\kappa_\alpha^{-1} \nu^{1-\alpha}}
  \int_0^\infty 
  \frac
{  
1 
}
{
    \big(\tau^{3-\alpha}   +  \tau^{1-\alpha}\big)   
}
d\tau \le (1-\alpha)C_0 \nu^{\alpha-1},
$$
where the last inequality holds since $\min_{0\le \alpha\le 1}\kappa_\alpha^{-1} > 0$ and the integral is bounded uniformly over $\alpha \in [\alpha_0, 1]$ for all $\alpha_0\in (0,1) $. 

\medskip 
b) 
The estimate \eqref{theta_m1} holds since (see  \cite[Lemma 5.2]{ChK})
$$
b_\alpha = 
\frac 1 \pi \int_0^\infty
\arctan \frac{  
\tau^{\alpha-3}  \sin \frac {1- \alpha}2\pi   
}
{
1 +  \tau^{\alpha-3} \cos \frac {1- \alpha}2\pi 
}
d\tau 
$$
and therefore    
\begin{equation}\label{int_theta_bnd}
\left|\frac 1\pi \int_0^\infty \theta(\nu \tau )d\tau - b_\alpha \right| \le   
%\int_0^\infty 
%\left|
%\frac{  
%\tau^{\alpha-3}  \sin \frac {1- \alpha}2\pi   
%}
%{
%\kappa_\alpha^{-1}\nu^{1-\alpha} \big(1 + \tau^{-2} \big) +   1 +  \tau^{\alpha-3} \cos \frac {1- \alpha}2\pi 
%}
%- 
%\frac{  
%\tau^{\alpha-3}  \sin \frac {1- \alpha}2\pi   
%}
%{
%1 +  \tau^{\alpha-3} \cos \frac {1- \alpha}2\pi 
%}
%\right|
%d\tau \le \\
%&
\nu^{1-\alpha}\kappa_\alpha^{-1}\sin \tfrac {1- \alpha}2\pi
\int_0^\infty
\frac
{
 \big(1 + \tau^{-2} \big) \tau^{\alpha-3}  
}
{
1+\tau^{2\alpha-6}   
}
d\tau 
\end{equation}
where we used the identity $\arctan x -\arctan y = \arctan (x-y)/(1+xy)$.

Further, define  
$$
\theta_0(\tau) := \arctan 
\frac{  
\tau^{\alpha-3}  \sin \frac {1- \alpha}2\pi   
}
{
  1 +  \tau^{\alpha-3} \cos \frac {1- \alpha}2\pi 
},\qquad \tau>0
$$
and  
$$
X_0(i) := \exp \left(\frac 1 \pi \int_0^\infty \frac{\theta_0(\tau)}{\tau - i}d\tau \right).
$$
It is shown in  \cite[Lemma 5.5]{ChK} that 
$$
|X_0(i)| = \sqrt{\frac {3-\alpha} 2} \quad \text{and} \quad 
\arg\{X_0(i)\} = \frac{1-\alpha}{8}\pi.
$$
The estimates in \eqref{Xcargabs} are obtained by bounding 
$
|\theta(\nu \tau)-\theta_0(\tau)| 
$
as in \eqref{int_theta_bnd}. 

\end{proof}
 
These estimates are the key to the following approximation: 

\begin{lem} 
The solutions to the system \eqref{pqeq} and \eqref{alg} can be enumerated so that 
\begin{equation}\label{nun2}
\nu_n = \pi n -\frac \pi 2 + g(\nu_n) +  n^{-1} r_n(\alpha), \quad n\in \mathbb{Z}
\end{equation}
where $\sup_{\alpha \in [\alpha_0, 1]}r_n(\alpha)<\infty$ for any $\alpha_0 \in (0,1)$
and 
\begin{equation}\label{gnu}
g(\nu) := -   2\arg\{X(\nu i)\} + \frac \pi 2 - \arg\{    i  + b_\alpha(\nu) \}.
\end{equation}
\end{lem}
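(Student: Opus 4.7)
The plan is to extract the leading-order form of the algebraic condition \eqref{alg} by inserting the bounds \eqref{abpmest} together with Lemma~\ref{lem2.5}, and then to recognise the result as a perturbed sine whose zeros enumerate the solutions $\nu_n$.

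First I would rewrite \eqref{alg} as $\Im\{\eta(\nu)\overline{\xi(\nu)}\}=0$ and exploit the Schwarz reflection $\overline{X(\nu i)}=X(-\nu i)$ (valid because the density $\theta$ in \eqref{Xzdef} is real) together with the reality of $b_\alpha(\nu)$. The bounds \eqref{abpmest} collapse the matrix entries in \eqref{apmbpm} to their limits, $a_+(\pm i)=2+O(\nu^{-1})$, $a_-(\pm i)=O(\nu^{-1})$, $b_+(\pm i)=\pm 2i+O(\nu^{-2})$, $b_-(\pm i)=O(\nu^{-2})$, with all constants uniform in $\alpha\in[\alpha_0,1]$. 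Substituting into \eqref{xi_eta} produces
\[
\eta(\nu)=2e^{-i\nu/2}X(-\nu i)+O(\nu^{-1}),\qquad \xi(\nu)=-2\bigl(i+b_\alpha(\nu)\bigr)e^{i\nu/2}X(\nu i)+O(\nu^{-1}).
\]

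Next, using the identity $\arg(i-b_\alpha)=\pi-\arg(i+b_\alpha)$ (valid because $b_\alpha(\nu)\ge 0$ for $\alpha\in(0,1]$), the product simplifies to
\[
\eta(\nu)\overline{\xi(\nu)}
=4\,|i+b_\alpha(\nu)|\,|X(\nu i)|^2\,
e^{\,i\bigl(\pi-\arg(i+b_\alpha(\nu))-2\arg X(\nu i)-\nu\bigr)}+O(\nu^{-1}).
\]
By Lemma~\ref{lem2.5} both $|X(\nu i)|$ and $|i+b_\alpha(\nu)|\ge 1$ are bounded away from zero uniformly in $\alpha$, so the vanishing of $\Im\{\eta\overline{\xi}\}$ reduces to
\[
\sin\bigl(\nu+2\arg X(\nu i)+\arg(i+b_\alpha(\nu))\bigr)=O(\nu^{-1}).
\]
Inverting the sine near each of its zeros and using the identity $\pi n-\arg(i+b_\alpha(\nu))-2\arg X(\nu i)=\pi n-\pi/2+g(\nu)$, which is precisely the definition \eqref{gnu}, converts this into the fixed-point equation $\nu=\pi n-\pi/2+g(\nu)+O(\nu^{-1})$.

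To extract the enumeration, I would note that Lemma~\ref{lem2.5} also gives $g(\nu)=O\bigl((1-\alpha)\nu^{\alpha-1}\bigr)=o(1)$ and $g'(\nu)=o(1)$ as $\nu\to\infty$, uniformly in $\alpha\in[\alpha_0,1]$. For each $n$ large enough the right-hand side of the fixed-point equation is therefore a strict contraction on a neighbourhood of $(n-\tfrac12)\pi$, and the Banach fixed-point theorem delivers a unique solution $\nu_n$ there, automatically satisfying $\nu_n=\pi n+O(1)$. The $O(\nu_n^{-1})$ correction hence takes the form $n^{-1}r_n(\alpha)$ with $\sup_{\alpha\in[\alpha_0,1]}|r_n(\alpha)|<\infty$, which is \eqref{nun2}. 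Exhaustivity of the enumeration---that no roots are missed or counted twice---can be obtained either by a winding-number/Rouch\'e argument applied to $\eta(\nu)\overline{\xi(\nu)}-\overline{\eta(\nu)}\xi(\nu)$, or by continuous deformation from $\alpha=1$, where the spectrum reduces to the explicit Brownian one \eqref{eigBm}.

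The main technical obstacle is the uniform-in-$\alpha$ propagation of all these estimates, and in particular ensuring that the contraction constants of the integral operator behind \eqref{abpmest} and of the fixed-point map for $\nu$ stay bounded below $1$ uniformly as $\alpha\uparrow 1$, where the kernel of \eqref{pqeq} degenerates. Once this uniformity is in hand, the remaining manipulations amount to direct bookkeeping of the leading-order terms.
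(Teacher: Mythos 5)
Your argument is correct and follows essentially the same route as the paper: plugging \eqref{abpmest} and Lemma \ref{lem2.5} into \eqref{xi_eta}, reducing the condition \eqref{alg} to the phase equation $\nu + 2\arg\{X(\nu i)\} + \arg\{i+b_\alpha(\nu)\} = \pi n + O(\nu^{-1})$, and solving it by a fixed-point/contraction argument near $(n-\tfrac12)\pi$. The only caveat is that the derivative control you attribute to Lemma \ref{lem2.5} (the analogue of the paper's bound $|R_2'(\nu)|\le C\nu^{\alpha-1}$) is not actually stated there and requires the same additional, ``tedious but straightforward'' computation that the paper itself invokes.
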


\begin{proof}
Plugging the estimates \eqref{abpmest} into expressions from \eqref{xi_eta} we obtain
\begin{equation}\label{seeme}
 \xi(\nu)\overline{\eta(\nu)}=\,   
% e^{i\nu }  X(\nu i)^2 \big(  b_+(-i)  - b_\alpha(\nu) a_+(-i) \big) a_+(-i)\Big(1+R(\nu)\Big) =\\
%&
- 4 e^{ \nu i}  X(\nu i)^2 \big(   i  + b_\alpha(\nu) \big)  \Big(1+R_1(\nu)\Big) =
-  4 e^{ \nu i +\frac \pi 2 i }    \Big(1+R_2(\nu)\Big) 
\end{equation}
where $|R_1(\nu)|\le C \nu^{-1}$ and $|R_2(\nu)|\le C \nu^{\alpha-1}$ with a constant $C$ which depends only on $\alpha_0$.  
Hence equation \eqref{alg} with $\nu>0$ reads
$$
\nu +\frac \pi 2 + \arctan \frac{\Im\{R_2(\nu)\}}{1+\Re\{R_2(\nu)\}}= \pi n, \qquad n\in \mathbb{Z}.
$$
A tedious but otherwise straightforward calculation reveals that $|R_2'(\nu)|\le C \nu^{\alpha-1}$ as well.
Hence for each $n$ large enough,  the unique solution to the system \eqref{lambdanu} and \eqref{pqeq} 
is obtained by fixed-point iterations. 
The algebraic component $\nu_n$ of this solution satisfies, cf. \eqref{seeme} and  \eqref{alg}
$$
\nu_n  +   2\arg\{X(\nu_n i)\} + \arg\{    i  + b_\alpha(\nu_n) \} + \arctan \frac{\Im\{ R_1(\nu_n)\} }{1+\Re\{ R_1(\nu_n)\}} =\pi n
$$
which yields the claimed formula since $|R_1(\nu)|\le C \nu^{-1}$.
\end{proof}

The enumeration of the eigenvalues, introduced by this lemma, may differ from that which puts them in the decreasing order. 
A calibration procedure, detailed in  \cite[Section 5.1.7]{ChK}, shows that the two enumerations do in fact coincide.  
The formulas in Theorem \ref{eig-thm} are obtained from \eqref{lambdanu} and \eqref{nun2}, 
by plugging the estimates from Lemma \ref{lem2.5} into \eqref{gnu} and replacing $\alpha = 2-2H$.

\section{Proof of Theorem \ref{main-thm}}

The proof uses the theory of small deviations developed in \cite{DLL96}, which addresses the problem of 
calculating exact asymptotics of the probabilities 
\begin{equation}\label{Pr}
\P\Big(\sum_{j=1}^\infty \phi(j) Z_j \le r\Big)\quad \text{as\ } r\to 0,
\end{equation}
where $Z_j$'s are i.i.d. nonnegative random variables and $\phi(j)$ is a summable sequence of positive numbers. 
Squared $L_2$-norm of a Gaussian process can be written as such series with $Z_j\sim \chi^2_1$  and $\phi(j):=\lambda_j$, where 
$\lambda_j$'s are the eigenvalues of its covariance operator.
In what follows $\phi(t)$, $t\in \Real_+$ stands for the function, obtained by replacing the integer
index in $\phi(j)$ with a real positive variable $t$. 

The main ingredients in the asymptotic analysis of \eqref{Pr} in \cite{DLL96} are functions, defined in terms of the Laplace transform 
$f(s):= \E e^{-s Z_1}=(1-2s)^{-\frac 1 2}$, $s\in (-\infty,\frac 1 2)$ of the $\chi^2_1$-distribution: 
\begin{equation}\label{I012}
\begin{aligned}
& I_0(u) :=  \int_1^\infty \log f(u \phi(t))dt =
-\frac 1 2 \int_1^\infty \log   (1+2u \phi(t))dt \\
& 
I_1(u) :=  \int_1^\infty u\phi(t) (\log f)'(u\phi(t))dt =
-\int_1^\infty  \frac {u\phi(t)} {1+2u\phi(t)}dt
\\
& 
I_2(u) := \int_1^\infty (u\phi(t))^2 (\log f)''(u\phi(t))dt = 2\int_1^\infty  \left(\frac {  u\phi(t) }{ 1+2u\phi(t) }\right)^2dt
\end{aligned}
\end{equation}
We will apply the following result: 
 
\begin{cor}[Corollary 3.2 from \cite{DLL96}]\label{cor} The $L_2$-ball probabilities satisfy 
\begin{equation}\label{Pfla}
\P\big(\|\widetilde B\|_2^2  \le r\big) \sim   \Big(\sqrt{u(r)}I_2(u(r))\Big)^{-\frac 12}\exp\Big(I_0(u(r))+ u(r) r\Big)\quad \text{as\ }r\to 0, 
\end{equation}
where $u(r)$ is any function satisfying 
\begin{equation}\label{cond}
\lim_{r\to 0}\frac{I_1(u(r))+u(r) r}{\sqrt{I_2(u(r))}}=0.
\end{equation} 
\end{cor}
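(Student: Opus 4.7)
The proof is a textbook application of exponential (Cramér) tilting combined with the saddle-point method, applied to $S=\|\widetilde B\|_2^2$, which by the Karhunen-Lo\`eve theorem equals $\sum_{j\ge 1}\widetilde\lambda_j Z_j$ with $Z_j$ i.i.d.\ $\chi^2_1$. The starting point is the exact identity, valid for every $u>0$,
\begin{equation*}
\P(S\le r) = M(u)\,e^{ur}\,\E_{\Q_u}\!\bigl[e^{-u(r-S)}\one{S\le r}\bigr],
\end{equation*}
where $M(u):=\E e^{-uS}=\prod_{j}(1+2u\widetilde\lambda_j)^{-1/2}$, $\Lambda:=\log M$, and $d\Q_u/d\P=e^{-uS}/M(u)$. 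Differentiating under the sum identifies $I_1(u)$ and $I_2(u)$ as integral approximations of $u\Lambda'(u)=-\E_{\Q_u}[uS]$ and $u^2\Lambda''(u)=\mathrm{Var}_{\Q_u}(uS)$ respectively, so the hypothesis of the corollary is precisely the saddle-point condition that $\E_{\Q_u}[u(S-r)]$ is $o\bigl(\sqrt{\mathrm{Var}_{\Q_u}(uS)}\bigr)$.

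Next I would apply a local central limit theorem to the tilted distribution of $u(S-r)$. Under $\Q_u$, $uS$ is a sum of independent tilted chi-squares whose variances add up to $I_2(u)(1+o(1))\to\infty$, and a Lyapunov bound is readily verified using the power-law eigenvalue decay of Theorem \ref{eig-thm}. The LCLT yields a Gaussian density for $u(S-r)$ with mean $\mu_u\approx-(I_1(u)+ur)=o(\sqrt{I_2(u)})$ and variance $I_2(u)(1+o(1))$. Substituting this into the tilting identity, completing the square in the integrand, and invoking Mills's ratio for the resulting truncated Gaussian integral on $(-\infty,0)$ collapses the expectation to
\begin{equation*}
\E_{\Q_u}\!\bigl[e^{-u(r-S)}\one{S\le r}\bigr]\;\sim\;(2\pi I_2(u))^{-1/2}.
\end{equation*}

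The remaining factor of $u^{-1/4}$ arises from the Euler--Maclaurin discrepancy between the exact CGF $\Lambda(u)=-\tfrac12\sum_j\log(1+2u\widetilde\lambda_j)$ and its integral surrogate $I_0(u)$: with $\phi(t)$ interpolating $\widetilde\lambda_j$ between integers, the boundary correction $\tfrac12 f(1)$ for $f(t)=-\tfrac12\log(1+2u\phi(t))$ equals $-\tfrac14\log u+O(1)$ as $u\to\infty$, so $e^{\Lambda(u)-I_0(u)}\asymp u^{-1/4}$. This is exactly the factor that converts the naive prefactor $I_2(u)^{-1/2}$ into $(\sqrt u\,I_2(u))^{-1/2}$ while replacing $e^{\Lambda(u)+ur}$ by $e^{I_0(u)+ur}$ in the exponent.

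The principal technical obstacle is making the local CLT \emph{uniform} in the saddle point $u=u(r)\to\infty$, with error bounds sharp enough that only the hypothesis $(I_1+ur)/\sqrt{I_2}\to 0$ (rather than an exact saddle-point equation) is required. The cleanest route is a direct Bromwich-contour representation of the density of $S$ followed by a steepest-descent deformation through the saddle, which produces the Gaussian approximation and quantitative error control in one stroke; this is the path followed in the proof of Theorem 3.1 of \cite{DLL96}, from which the corollary is read off by specialising to $\phi(j)=\widetilde\lambda_j$.
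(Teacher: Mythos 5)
There is nothing to compare against inside the paper: the statement is imported verbatim as Corollary 3.2 of \cite{DLL96} and the authors give no proof, only the citation. Your sketch correctly reconstructs the argument behind that result — the tilting identity, the identification of $I_1(u)$ and $I_2(u)$ with the tilted mean and variance so that \eqref{cond} is exactly the saddle-point condition, the normal/local approximation giving the factor $I_2(u)^{-1/2}$, and the sum-versus-integral discrepancy producing the extra $u^{-1/4}$ — and you appropriately defer the uniform local CLT to Theorem 3.1 of \cite{DLL96}, so the proposal is essentially the same route as the cited source. The one step stated too casually is the Euler--Maclaurin comparison: the first-order remainder bound is of the same order $O(\log u)$ as the boundary term $\tfrac12 F(1)$ itself, so to conclude $\Lambda(u)-I_0(u)=-\tfrac14\log u+O(1)$ you need the second-order (bounded-variation of $F'$) estimate, which is where the regularity hypothesis on $\phi$ (the condition $\frac{d}{dt}\log\phi(t)\to 0$ invoked in the paper, satisfied by the power-type $\phi$ at hand) actually enters.
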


\subsection{Asymptotic expansion of $I_j(u)$'s}
A preliminary step towards application of Corollary \ref{cor} is to derive the exact asymptotics of the functions from \eqref{I012} 
as $u\to\infty$ for the weight function
\begin{equation}\label{phit}
\phi(t) = \sum_{j=1}^k c_j t^{-d_j}.
\end{equation}
In our case, $k=3$ and $c_j$ and $d_1<d_2<d_3$ are positive constants, whose values depend on $H$.
In particular, 
$
\lim_{t\to\infty}\frac d{dt} \log \phi(t) =0
$
holds, as required in \cite{DLL96}. 
\nocite{NN04ptrf} 
It will be convenient to use constants 
$$
a_j   :=  c_j/c_1 \quad \text{and}\quad 
\delta_j := d_j - d_1 
$$
and to define the new variable $v$ by the formula 
$$
2u c_1 = v^{-d_1},
$$
which converges to zero as $u\to\infty$. Obviously $a_1=1$ and $\delta_1=0$, and for the specific values of constants $d_j$'s needed below, 
we also have $\delta_3=1$.

\subsubsection{Asymptotic expansion of $I_0(u)$} 
Integrating by parts we get
\begin{align*}
I_0(u) = &
-\frac 1 2 \int_1^\infty \log   (1+2u \phi(t))dt = \\
&
\frac 1 2\log   (1+2u \phi(1))  + \int_1^\infty \frac { u t\phi'(t)}{1+2u \phi(t)}  dt =\\
&
\frac 1 2\log   \Big(1+2u \sum_{j=1}^k c_j  \Big)  - \sum_{i=1}^k c_i d_i \int_1^\infty \frac {   u t^{-d_i} }{1+2u \sum_{j=1}^k c_j t^{-d_j}}  dt.
\end{align*}
Changing the integration variable and using the above notations, this can be written as
\begin{equation}\label{I0}
I_0(u) = \frac 1 2\log   \Big(1+v^{-d_1} \sum_{j=1}^k a_j  \Big)  
- \frac 1 2 \sum_{i=1}^k   a_i d_i v^{\delta_i-1}\int_v^\infty \frac { \tau^{-\delta_i} }{\tau^{d_1}+1 + p(v/\tau)}  d\tau,
\end{equation}
where we defined
$$
p(s) :=  a_2 s^{\delta_2}+a_3 s^{\delta_3}.
$$
Let us find the exact asymptotics of each integral as $v\to 0$. The first one gives
\begin{align*}
&
v^{\delta_1-1}\int_v^\infty \frac { \tau^{-\delta_1} }{\tau^{d_1}+1 + p(v/\tau)}  d\tau=
v^{-1}\int_v^\infty \frac { 1 }{\tau^{d_1}+1 + p(v/\tau)}  d\tau = \\
&
v^{-1}\int_v^\infty \frac { 1 }{\tau^{d_1}+1 }  d\tau
-
v^{-1}\int_v^\infty \frac { p(v/\tau) }{\big(\tau^{d_1}+1 + p(v/\tau)\big)\big(\tau^{d_1}+1\big)} d\tau =\\
&
v^{-1}\int_v^\infty \frac { 1 }{\tau^{d_1}+1 }  d\tau
-J_{0,1}(v)
-
J_{0,2}(v)
\end{align*}
where we defined 
\begin{align*}
J_{0,1}(v) & :=   
v^{\delta_2-1}a_2\int_v^\infty \frac {  \tau^{-\delta_2} }{\big(\tau^{d_1}+1 + p(v/\tau)\big)\big(\tau^{d_1}+1\big)} d\tau \\
J_{0,2}(v) &:=v^{\delta_3-1}a_3\int_v^\infty \frac {   \tau^{-\delta_3} }{\big(\tau^{d_1}+1 + p(v/\tau)\big)\big(\tau^{d_1}+1\big)} d\tau.
\end{align*}
The latter term with $\delta_3=1$ satisfies 
\begin{align*}
J_{0,2}(v) = &\, a_3\int_v^\infty \frac {   \tau^{-1} }{\big(\tau^{d_1}+1 + p(v/\tau)\big)\big(\tau^{d_1}+1\big)} d\tau =\\
%&
%a_3\int_v^\infty \frac {   \tau^{-1} }{\big(\tau^{d_1}+1\big)^2 } d\tau
%-
%a_3 v^{\delta_2}\int_v^\infty \frac{\tau^{-1-\delta_2}}{(\tau^{d_1}+1)^2} 
%\frac {  a_2  }{\tau^{d_1}+1 + p(v/\tau) }   d\tau 
%-
%a_3v \int_v^\infty \frac{\tau^{-1}}{(\tau^{d_1}+1)^2} 
%\frac { a_3  \tau^{-1} }{\tau^{d_1}+1 + p(v/\tau) }   d\tau =\\
&
a_3\int_v^\infty \frac {   \tau^{-1} }{\big(\tau^{d_1}+1\big)^2 } d\tau
+O(1) = -a_3\log v
+O(1),
\end{align*}
and similarly
\begin{align*}
J_{0,1}(v) =  &\,
v^{\delta_2-1}a_2\int_0^\infty \frac {  \tau^{-\delta_2} }{\big(\tau^{d_1}+1\big)^2} d\tau \\
&
-
v^{2\delta_2-1}a_2\int_v^\infty \frac {  \tau^{-2\delta_2} }{\big(\tau^{d_1}+1\big)^2}\frac {a_2  }{\tau^{d_1}+1 + p(v/\tau)}d\tau 
%-
%v^{\delta_2+\delta_3-1}a_2\int_v^\infty \frac {  \tau^{-\delta_2-\delta_3} }{\big(\tau^{d_1}+1\big)}\frac {a_3   }{\big(\tau^{d_1}+1 + p(v/\tau)\big)\big(\tau^{d_1}+1\big)}d\tau
+O(1).
\end{align*}
If $2\delta_2-1>0$, the second term on the right is of order $O(1)$, otherwise we can proceed similarly to obtain the expansion
$$
J_{0,1}(v) = -\sum_{k=1}^{m} (-a_2)^{k}\chi_{1,k} v^{k \delta_2-1 }
$$
where $m$ is the largest integer such that $m \delta_2-1 <0$ and, \cite[formula 3.241.4]{GR7ed2007},
\begin{equation}\label{chi1k}
\chi_{1,k} := \int_0^\infty \frac {  \tau^{-k\delta_2} }{\big(\tau^{d_1}+1\big)^{k+1}} d\tau= 
\frac 1 {d_1} \frac{\Gamma\big(\frac {1-k\delta_2} {d_1}\big)\Gamma\big(k+1-\frac{1-k\delta_2}{d_1}\big)}{\Gamma\big(k+1\big)},
\qquad k =0,...,m.
\end{equation}
% General formula from GR:
%$$
%\int_0^\infty \frac{\tau^{-\mu}}{(\tau^{d_1}+1)^m}d\tau=
%\frac 1 {d_1} \frac{\Gamma\big(\frac {1-\mu} {d_1}\big)\Gamma\big(m-\frac{1-\mu}{d_1}\big)}{\Gamma\big(m\big)}
%$$
Plugging all the estimates back we obtain 
\begin{equation}\label{int1}
v^{\delta_1-1}\int_v^\infty \frac { \tau^{-\delta_1} }{\tau^{d_1}+1 + p(v/\tau)}  d\tau
=  a_3\log v
+\sum_{k=0}^{m}  (-a_2)^{k}\chi_{1,k}v^{k \delta_2-1 }
 + O(1), \quad \text{as\ }v\to 0.
\end{equation}

Further, the second integral in the sum in \eqref{I0} reads
\begin{align*}
&
v^{\delta_2-1}\int_v^\infty \frac { \tau^{-\delta_2} }{\tau^{d_1}+1 + p(v/\tau)}  d\tau = 
v^{\delta_2-1}\int_v^\infty \frac { \tau^{-\delta_2} }{\tau^{d_1}+1 }  d\tau
- J_{1,1}(v)
- J_{1,2}(v).
\end{align*}
Here 
\begin{align*}
J_{1,2}(v):= & 
v^{\delta_2+\delta_3-1} a_3 \int_v^\infty   
\frac{   \tau^{-\delta_2-\delta_3} }{\big(\tau^{d_1}+1 + p(v/\tau)\big)\big(\tau^{d_1}+1\big)}   
d\tau \le  \\
&
v^{\delta_2+\delta_3-1} a_3 \int_v^\infty   
\frac{   \tau^{-\delta_2-\delta_3} }{\big(\tau^{d_1}+1  \big)^2 }   
d\tau = O(1)
\end{align*}
since $ \delta_2+\delta_3 >1$. The second term  
$$
J_{1,1}(v):=  v^{2\delta_2-1}a_2\int_v^\infty   
\frac{   \tau^{-2\delta_2}  }{\big(\tau^{d_1}+1 + p(v/\tau)\big)\big(\tau^{d_1}+1\big)}   
d\tau
$$
is of order $O(1)$, if $2\delta_2>1$. Otherwise, 
$$
J_{1,1}(v) = v^{2\delta_2-1}a_2\int_0^\infty   
\frac{   \tau^{-2\delta_2}  }{\big(\tau^{d_1}+1\big)^2}   
d\tau
- v^{3\delta_2-1}a_2^2\int_v^\infty  \frac{\tau^{-3\delta_2}}{\big(\tau^{d_1}+1\big)^2} 
\frac{   1  }{\tau^{d_1}+1 + p(v/\tau)}   
d\tau + O(1)
$$
where the second term is of order $O(1)$, if $3\delta_2>1$ and so on. Thus we obtain asymptotics  
\begin{equation}\label{int2}
v^{\delta_2-1}\int_v^\infty \frac { \tau^{-\delta_2} }{\tau^{d_1}+1 + p(v/\tau)}  d\tau 
=
\sum_{k=1}^{m} (-a_2)^{k-1}\chi_{0,k}v^{k\delta_2-1}  + O(1), \quad\text{as\ } v\to 0
\end{equation}
with 
\begin{equation}\label{chi2k}
\chi_{0,k} : = \int_0^\infty  \frac{\tau^{-k\delta_2}}{\big(\tau^{d_1}+1\big)^k} d\tau =
\frac 1 {d_1} \frac{\Gamma\big(\frac {1-k\delta_2} {d_1}\big)\Gamma\big(k-\frac{1-k\delta_2}{d_1}\big)}{\Gamma\big(k\big)}
,\qquad k=1,...,m.
\end{equation}
Finally, since $\delta_3=1$, the last summand in \eqref{I0} contributes  
\begin{equation}\label{int3}
v^{\delta_3-1}\int_v^\infty \frac { \tau^{-\delta_3} }{\tau^{d_1}+1 + p(v/\tau)}  d\tau =  
\int_v^\infty \frac { \tau^{-1} }{\tau^{d_1}+1 }  d\tau  
-
J_{3,1}(v)
-
J_{3,2}(v)=
-\log v 
+O(1)
\end{equation}
where we used the estimates
$$
J_{3,1}(v) :=v^{\delta_2+\delta_3-1}\int_v^\infty   \frac {  a_2  \tau^{-\delta_3-\delta_2}  }{\big(\tau^{d_1}+1 + p(v/\tau)\big)\big(\tau^{d_1}+1\big)}   
d\tau 
%\le v^{\delta_2+\delta_3-1}\int_v^\infty   \frac {  a_2  \tau^{-\delta_3-\delta_2}  }{ \big(\tau^{d_1}+1\big)^2}   d\tau 
= O(1)
$$
and
$$
J_{3,2}(v) :=
v^{2\delta_3-1}\int_v^\infty  \frac {   a_3 \tau^{-2\delta_3} }{\big(\tau^{d_1}+1 + p(v/\tau)\big)\big(\tau^{d_1}+1\big)}   
d\tau
%\le v^{2\delta_3-1}\int_v^\infty  \frac {   a_3 \tau^{-2\delta_3} }{\big(\tau^{d_1}+1\big)^2}   d\tau
= O(1).
$$
Plugging \eqref{int1}, \eqref{int2} and \eqref{int3} into \eqref{I0} we obtain 
\begin{equation}\label{I0v}
\begin{aligned}
I_0(u) = &
-\tfrac 1 2d_1\big( 1  +     a_1  a_3 -     a_3 \tfrac{d_3}{d_1}  \big)\log v \\
&
-\frac 1 2    a_1 d_1    \chi_{1,0}v^{-1 }
+ \frac 1 2        \sum_{k=1}^{m} (-a_2)^{k }\Big(d_2  \chi_{0,k}-a_1 d_1 \chi_{1,k}\Big)v^{k\delta_2-1}
+ O(1).
\end{aligned}
\end{equation}

\subsubsection{Asymptotic expansion of $I_1(u)$}
Using the asymptotic formulas, already derived above, we have
\begin{align}
& I_1(u)  :=   \nonumber
-\sum_{i=1}^k c_i\int_1^\infty  \frac {u t^{-d_i}} {1+2u\sum_{j=1}^k c_j t^{-d_j}}dt = 
- \frac 1 2\sum_{i=1}^k a_i v^{\delta_i-1}\int_v^\infty \frac { \tau^{-\delta_i} }{\tau^{d_1}+1 + p(v/\tau)}  d\tau =\\
&\label{I1lim}
\frac 1 2 a_3 ( 1-a_1)    \log v
- \frac 1 2  a_1    \chi_{1,0}v^{-1 }
+ \frac 1 2   \sum_{k=1}^{m} (-a_2)^{k }\big(\chi_{0,k}-a_1\chi_{1,k}\big)v^{k\delta_2-1} 
+ O(1),
\end{align}
where the logarithmic term vanishes since $a_1=1$.

\subsubsection{Asymptotic expansion of $I_2(u)$}
We will need only the leading asymptotic term of $I_2(u)$. To this end, we have 
\begin{equation}\label{I2lim}
\begin{aligned}
I_2(u) =\, &  2\int_1^\infty  \left(\frac {  u \sum_{j=1}^k c_j t^{-d_j} }{ 1+2u \sum_{j=1}^k c_j t^{-d_j} }\right)^2dt =  \\
&
\frac 1 2 v^{-1} \int_v^\infty  \left(
\frac{1+p(v/\tau) }{ \tau^{d_1}+ 1+ p(v/\tau) }
\right)^2d\tau = 
\frac 1 2\chi_{3,1} v^{-1}\big(1+o(1)\big)
\end{aligned}
\end{equation}
where we defined 
\begin{equation}\label{chi31}
\chi_{3,1}:=  \int_0^\infty  \frac{1  }{ \big(\tau^{d_1}+ 1\big)^2} d\tau = \frac 1 {d_1} \frac{\Gamma\big(\frac {1} {d_1}\big)\Gamma\big(2-\frac{1}{d_1}\big)}{\Gamma\big(2\big)}.
\end{equation}

\subsection{Case $\boldsymbol{H \in(\frac 1 2, 1)}$}  
As in the previous section,  $\alpha:=2-2H\in (0,1)$. 
By Theorem \ref{eig-thm}, eigenvalues of the covariance operator of $\widetilde B$ satisfy 
$$
\lambda(n) =    \nu_n^{-2}      +    \kappa_\alpha   \nu_n^{ \alpha-3} 
$$
where $\kappa_\alpha$ is the constant from \eqref{mixed_lambdan} expressed in terms of $\alpha$ (see \eqref{lambdanu} below):
$$
\kappa_\alpha:=\dfrac {c_\alpha} {\Gamma(\alpha)}  \dfrac {\pi} {\cos \frac \pi 2\alpha}=\Gamma(2H+1)\sin (\pi H),
$$
and, for $\alpha\in (0,1)$,   $\nu_n = (n-\frac 1 2)\pi+O(n^{\alpha-1})$ as $n\to\infty$.
Taylor expansion yields 
\begin{align*}
\nu_n^{-2} =\, & %(\pi n)^{-2}\Big(1-\frac 1  {2n} +O(n^{\alpha-2})\Big)^{-2} =
% (\pi n)^{-2}\Big(1+\frac 1  { n} +O(n^{\alpha-2})\Big) =
\frac 1 {\pi^2} n^{-2}+\frac 1 {\pi^2} n^{-3}  +O(n^{\alpha-4}) 
\\
\nu_n^{\alpha-3} =\, &  
%(\pi n)^{\alpha-3}\Big(1-\frac 1 {2n} +O(n^{\alpha-2})\Big)^{\alpha-3}=
%(\pi n)^{\alpha-3}\Big(1+(3-\alpha)\frac 1 {2n} +O(n^{\alpha-2})\Big) =
\frac 1 {\pi^{3-\alpha}} n^{\alpha-3}+\frac{3-\alpha} 2 \frac 1 {\pi^{3-\alpha}} n^{\alpha-4}     +O(n^{2\alpha-5}) 
\end{align*}
and consequently 
$$
\lambda(n) =    \frac 1 {\pi^2} n^{-2}+\frac {\kappa_\alpha} {\pi^{3-\alpha}} n^{\alpha-3}+\frac 1 {\pi^2} n^{-3}  +O(n^{\alpha-4}) =: 
\phi(n) + O(n^{\alpha-4}).
$$
By Li's comparison theorem, \cite[Theorem 2]{Li92},
$$
\P\Big(\sum_{n=1}^\infty \lambda(n) Z_n\le \eps^2\Big)\sim \Big(\prod_{n=1}^\infty \frac{\phi(n)}{\lambda(n)}\Big)^{1/2}
\P\Big(\sum_{n=1}^\infty \phi(n) Z_n\le \eps^2\Big)\quad \text{as\ } \eps\to 0
$$
if $\sum_{n=1}^\infty |1-\lambda(n)/\phi(n)|<\infty$, which holds in our case. Hence the desired asymptotics 
coincides with 
$$
\P\Big(\sum_{n=1}^\infty \phi(n) Z_n\le \eps^2\Big)\quad \text{as\ }\eps\to 0,
$$
up to a multiplicative constant. The function $\phi(t)$ has the form \eqref{phit} with  $k=3$ and  
\begin{equation}\label{csds}
\begin{aligned}
c_1 = \frac 1 {\pi^2}, &\qquad  d_1 = 2\\
c_2 =\frac {\kappa_\alpha} {\pi^{3-\alpha}},&\qquad d_2 = 3-\alpha\\
c_3 = \frac 1 {\pi^2}, & \qquad d_3 = 3
\end{aligned}
\end{equation}
Plugging these values into the asymptotic expansions \eqref{I0v}, \eqref{I1lim} and \eqref{I2lim} gives
\begin{align}
\label{I0limu}
I_0(u) & = 
 \frac 1 4\log  u  
-   \frac 1 {\sqrt 2   }u^{\frac1 2}
+  \sum_{k=1}^{m_\alpha}g_ku^{ \frac{1-k(1-\alpha)} 2}
+ O(1) \\
\label{I1limu}
I_1(u) & = -    \frac {1} {2\sqrt 2}  u^{\frac 1 2} -    \sum_{k=1}^{m_\alpha} h_k  u^{\frac{1-k(1-\alpha)}2 } + O(1) \\
\label{I2limu}
I_2(u) & = \sqrt{\frac {c_1} 2}\chi_{3,1}  u^{\frac 1 2} \big(1+o(1)\big)
\end{align}
where   $m_\alpha := \big\lfloor \frac 1 {1-\alpha}\big\rfloor$ and   
\begin{equation}\label{hkgk}
\begin{aligned}
%h_k := & \frac 1 2\big(-\tfrac {c_2}{c_1}\big)^{k }\big(\chi_{1,k}-\chi_{0,k}  \big)(2c_1)^{\frac{1-k(1-\alpha)}2 } \\
%g_k := & \big(-\tfrac {c_2}{c_1}\big)^{k }\big(\tfrac {d_2} 2  \chi_{0,k}-   \chi_{1,k}\big) 
%(2c_1)^{ \frac{1-k(1-\alpha)} 2}
h_k & :=    \frac 1 2(-\tfrac {c_2}{c_1})^{k }\big(\chi_{1,k}-\chi_{0,k}  \big)(2c_1)^{ \frac {1-k\delta_2}{d_1}} \\
g_k & := \frac 1 2(-\tfrac {c_2}{c_1})^{k }\Big(d_2  \chi_{0,k}-  d_1 \chi_{1,k}\Big)(2c_1)^{ \frac{1-k\delta_2 }{d_1}} 
\end{aligned}
\end{equation}
with constants $\chi_{i,k}$ defined in \eqref{chi1k}, \eqref{chi2k} and \eqref{chi31} and $\delta_2 = d_2-d_1$.

Application of Corollary \ref{cor} requires finding a function $u(r)$ which satisfies condition \eqref{cond}. 
To this end consider the equation (c.f. \eqref{I1limu}) 
\begin{equation}\label{eq_u}
 \frac {1}{2\sqrt 2}  u^{\frac 1 2} +  \sum_{k=1}^{\big\lfloor \frac{1}{2}\frac 1{1-\alpha}\big\rfloor} h_k u^{\frac{1-k(1-\alpha)}2 }
=ur,
\end{equation}
with respect to $u>0$. If we divide both sides by $u$, the left hand side becomes a monotonous function, which decreases to zero as $u\to\infty$,
and therefore this equation has unique positive solution $u(r)$, which grows to $+\infty$ as $r\to 0$. \
By the choice of the upper limit in the sum in \eqref{eq_u}, the power of $u(r)$ in the numerator of \eqref{cond} is strictly less than 
$\frac 1 4$ and hence \eqref{cond} holds in view of \eqref{I2limu}.

If we now let $u = (ry)^{-2}$ equation \eqref{eq_u} reads
$$
 \frac {1}{2\sqrt 2}  y +  \sum_{k=1}^{\left\lfloor \frac{1}{2}\frac 1{1-\alpha}\right\rfloor} h_k r^{ k(1-\alpha)  }y^{ k(1-\alpha)  +1 }
=1,\quad y>0.
$$ 
The function $r\mapsto y(r)$ is analytic in a vicinity of zero and can be expanded into series of the small parameter $r^{1-\alpha}$
$$
y(r) = y_0 + \sum_{j=1}^\infty y_j r^{j(1-\alpha)}.
$$ 
Let $\xi_j$ and $\eta_{k,j}$ be the coefficients of the expansions 
$$
 y(r)^{-2} =   \sum_{j=0}^{\infty} \xi_j r^{j(1-\alpha) } \quad \text{and}\quad 
 y(r)^{ k(1-\alpha)-1  } = \sum_{j=0}^\infty \eta_{k,j} r^{j(1-\alpha) }.
$$
Note that both are expressible in terms of $y_j$'s. Plugging these expansions into \eqref{Pfla} gives 
\begin{align*}
&
\P\big(\|\widetilde B\|_2^2  \le r\big) \sim \\
&
 r^{\frac 1 2}\exp\Big(
-   \frac 1 {\sqrt 2   }\sum_{j=0}^\infty \eta_{0,j} r^{j(1-\alpha)-1}
+  \sum_{k=1}^{m_\alpha} g_k
\sum_{j=0}^\infty \eta_{k,j} r^{(j+k)(1-\alpha)-1}
+ \sum_{j=0}^{\infty} \xi_j r^{j(1-\alpha)-1}  \Big) \sim \\
&
  r^{\frac 1 2}\exp\Big(
-   \frac 1 {8}   r^{-1} 
+ \sum_{\ell=1}^{m_\alpha} \Big(\xi_\ell    -   \frac 1 {\sqrt 2   }\eta_{0,\ell}\Big) r^{\ell(1-\alpha)-1}
+
\sum_{\ell=1}^{m_\alpha} \Big(\sum_{j=0}^{\ell-1} g_{\ell-j}\eta_{\ell-j,j} \Big)r^{\ell(1-\alpha)-1} 
  \Big).
\end{align*}
Changing back to $r=\eps^2$ and $\alpha=2-2H$, we obtain the following result, which implies \eqref{Peps} for $H\in (\frac 1 2, 1)$ 
and defines all of its ingredients: 

\begin{prop}\label{main-thm-largeH}

For $H\in (\frac 1 2,1)$ let $h_k$ and $g_k$ be the real sequences, defined by formulas \eqref{hkgk}
evaluated at the constants in \eqref{csds}.
Then for any $r>0$ the equation 
\begin{equation}\label{yreq}
y/y_0 + \sum_{k=1}^{\big\lfloor \frac 1 2\frac 1 {2H-1}\big\rfloor}   h_k r^{k(2H-1)}y^{k(2H-1)+1}=1
\end{equation}
with $y_0 = 2\sqrt{2}$ has unique positive root $y(r)$, which can be expaned into series  
\begin{equation}\label{yseries}
y(r) = y_0 + \sum_{j=1}^\infty y_j r^{j(2H-1)},
\end{equation}
convergent for all $r$ small enough.  Let $\xi_j$ and $\eta_{k,j}$ be coefficients of the power expansions  
\begin{equation}\label{xieta}
 y(r)^{-2} =   \sum_{j=0}^{\infty} \xi_j r^{j(2H-1) } \quad \text{and}\quad 
 y(r)^{ k(2H-1)-1  } = \sum_{j=0}^\infty \eta_{k,j} r^{j(2H-1) }
\end{equation}
and define 
$$
\beta_{\ell }(H) := 
\frac {1} {\sqrt 2   }  \eta_{0,\ell}  - \sum_{j=0}^{\ell-1} g_{\ell-j} \eta_{\ell-j,j} -\xi_\ell .
$$
Then (cf. \eqref{Peps})
\begin{equation}\label{Prob}
\P\big(\|\widetilde B\|_2   \le \eps\big)   \sim     \eps\exp\bigg(
-\frac 1 {8} \eps^{-2} -
    \sum_{\ell=1}^{\big\lfloor \frac 1 {2H-1}\big\rfloor}\beta_{\ell}(H) \eps^{2\ell(2H-1) -2} 
 \bigg), \quad \eps \to 0.
\end{equation}

\end{prop}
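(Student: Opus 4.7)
The plan is to apply Corollary \ref{cor} directly, with $\phi(n)$ as in \eqref{csds}, and then unravel the asymptotic expansions \eqref{I0limu}--\eqref{I2limu} under a suitable change of variables. Since the eigenvalue approximation $\phi(n)$ replaces the true $\widetilde\lambda_n$ up to terms whose ratio is summable (the error $O(n^{\alpha-4})$ is dominated by $\phi(n)\sim n^{-2}$), an application of Li's comparison theorem shows that the exact asymptotics of $\P(\|\widetilde B\|_2\le \eps)$ coincides, up to a universal multiplicative constant, with the corresponding probability for the series $\sum \phi(n) Z_n$.

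Next I would construct the function $u(r)$ required by Corollary \ref{cor}. Condition \eqref{cond} is ensured as soon as $I_1(u)+ur=o(\sqrt{I_2(u)})=o(u^{1/4})$, so it suffices to match the leading and subleading terms of $-I_1(u)$ with $ur$ up to accuracy $o(u^{1/4})$. This is precisely why equation \eqref{eq_u} is truncated at $k=\lfloor \tfrac{1}{2}\tfrac{1}{2H-1}\rfloor$: keeping further terms is unnecessary, while truncating earlier would fail \eqref{cond}. Dividing \eqref{eq_u} by $u$ yields a strictly decreasing function of $u$ with full range on $(0,\infty)$, giving existence and uniqueness of $u(r)$, along with $u(r)\to\infty$ as $r\to 0$. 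The substitution $u=(ry)^{-2}$ then recasts \eqref{eq_u} as the algebraic equation \eqref{yreq}.

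The analyticity of $y(r)$, expressed by \eqref{yseries}, follows from the analytic implicit function theorem: \eqref{yreq} at $r=0$ reads $y=y_0=2\sqrt 2$, and the partial derivative with respect to $y$ at $(r,y)=(0,y_0)$ equals $1/y_0\ne 0$, so $y$ extends as an analytic function of the parameter $\sigma:=r^{2H-1}$ in a neighborhood of the origin. Consequently the two auxiliary power series in \eqref{xieta} converge for $r$ small, and their coefficients $\xi_j,\eta_{k,j}$ are determined by elementary algebra from the $y_j$'s.

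The remaining step is to assemble the final formula: substitute $u(r)=(ry(r))^{-2}$ into \eqref{I0limu} and add $u(r)r$, then use \eqref{xieta} and the truncated form of \eqref{I1limu}--\eqref{I2limu} to re-expand each power of $u$ in $r$. The leading contribution $-\tfrac{1}{\sqrt 2}u^{1/2}+ur$ produces $-\tfrac{1}{8}r^{-1}$ after inserting $y_0=2\sqrt 2$, and every subsequent term $g_k u^{(1-k(1-\alpha))/2}$ expands into a series in $r^{2H-1}$ whose coefficients combine with the corresponding pieces coming from $I_1$ to yield exactly $-\beta_\ell(H)r^{\ell(2H-1)-1}$. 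The prefactor $\eps$ in \eqref{Prob} comes from the $\tfrac14\log u$ term in $I_0$ together with the $(\sqrt u I_2)^{-1/2}\sim u^{-3/8}$ factor and the change of variables $r=\eps^2$. The main obstacle is purely bookkeeping: one must verify that all contributions of order $\sigma^j$ for $j\le \lfloor 1/(2H-1)\rfloor$ are captured in the sum defining $\beta_\ell(H)$, while higher-order ones produce $o(1)$ in the exponent and can be absorbed into the implied multiplicative constant.
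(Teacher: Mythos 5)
Your proposal follows essentially the same route as the paper: reduce to $\phi(n)$ via Li's comparison theorem, apply Corollary \ref{cor} with $u(r)$ defined by the truncated equation \eqref{eq_u} (truncated exactly so that the numerator in \eqref{cond} is of order strictly below $u^{1/4}\asymp\sqrt{I_2(u)}$), substitute $u=(ry)^{-2}$ to obtain \eqref{yreq}, expand $y(r)$ in powers of $r^{2H-1}$, and collect coefficients into the $\beta_\ell$'s.

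One arithmetic slip in your prefactor bookkeeping: since $I_2(u)\asymp u^{1/2}$ by \eqref{I2limu}, the factor in \eqref{Pfla} is $\bigl(\sqrt{u}\,I_2(u)\bigr)^{-1/2}\asymp u^{-1/2}$, not $u^{-3/8}$ (you appear to have read $\sqrt{u\,I_2(u)}$). Combined with the $u^{1/4}$ coming from the $\tfrac14\log u$ term in \eqref{I0limu}, this gives $u^{-1/4}=(ry(r))^{1/2}\asymp r^{1/2}=\eps$, which is the prefactor in \eqref{Prob}; with your exponent $u^{-3/8}$ the same computation would yield $\eps^{1/2}$, contradicting the claim. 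With this corrected, the argument matches the paper's proof.
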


While the general closed form formula for constants $\beta_\ell$ in \eqref{Prob} would be cumbersome to derive,
they can be easily computed for any given value of $H$, at least numerically. The following example, demonstrates the algorithm 
of Proposition \ref{main-thm-largeH}. 

\begin{example}\

\medskip
\noindent
{\em Case $H\in [\frac 3 4,1)$.}  For the values of $H$ in this range,  $\lfloor \frac 1 {2H-1}\rfloor=1$ and the sum in 
\eqref{Prob} contains only one term:
$$
\P(\|\widetilde B\|_2\le \eps) \sim C(H) \eps \exp \Big(-\frac1 8 \eps^{-2}-\beta_1 \eps^{4H-4}\Big), \quad \eps\to 0,
$$
Thus we need to calculate only 
$
\beta_1 =  \frac 1 {\sqrt 2} \eta_{0,1}-g_1\eta_{1,0}-\xi_1.
$

Since $\lfloor \frac 1 2\frac 1 {2H-1}\rfloor=0$
equation \eqref{yreq} reduces to
$
y=y_0.
$
Comparing  
$
y(r)^{-2} = y_0^{-2} 
$
to \eqref{xieta} gives $\xi_1 = 0$. Similarly, comparing 
$y(r)^{-1  } =      y_0^{-1}$
with $ y(r)^{-1  }= \eta_{0,0}   +\eta_{0,1} r^{2H-1 } + O(r^{4H-2})$ gives $\eta_{0,1}=0$.
Finally $\eta_{1,0}=y_0^{2H-2}$ and hence 
$
\beta_1 = -g_1y_0^{2H-2}.
$
After simplification, formula \eqref{hkgk} yields
$
g_1 =  
% -2^{ \frac{ \alpha } 2}\tfrac {\kappa_\alpha}{\pi }  \big(\tfrac {3-\alpha} 2  \chi_{0,1}-   \chi_{1,1}\big)   =
% -2^{   -1-H }\tfrac {\sin (\pi H)}{\pi }    \Gamma(2H+1)\Gamma\big(1-H\big)\Gamma\big(H\big) =
- {2^{ -  H-1}}\Gamma(2H+1)       
$
%
% ==== detailed calculations =====
%$$
%\kappa_\alpha:=\dfrac {c_\alpha} {\Gamma(\alpha)}  \dfrac {\pi} {\cos \frac \pi 2\alpha} = \Gamma(2H+1)\sin (\pi H)
%$$
%$$
%\chi_{1,1}  =
%\frac 1 {2}  \Gamma\big(\frac {\alpha} {2}\big)\Gamma\big(2-\frac{\alpha}{2}\big)  =
%\frac 1 {2}  (1-\frac\alpha 2)\Gamma\big(\frac {\alpha} {2}\big)\Gamma\big(1-\frac{\alpha}{2}\big) 
%$$
%$$
%\chi_{0,1}  =
%\frac 1 {2} \Gamma\big(\frac {\alpha} {2}\big)\Gamma\big(1-\frac{\alpha}{2}\big)  
%$$
%$$
% \tfrac {3-\alpha} 2  \chi_{0,1}-   \chi_{1,1} = \frac 1 {4} \Gamma\big(\frac {\alpha} {2}\big)\Gamma\big(1-\frac{\alpha}{2}\big) 
%$$
%
and we obtain 
$$
\beta_1(H) =  {2^{2H-4}}\Gamma(2H+1).
$$

\medskip 
\noindent
{\em Case $H\in [\frac 2 3, \frac 3 4)$.} In this case $\big\lfloor \frac 1 {2H-1}\big\rfloor = 2$ and the sum in 
\eqref{Prob} contains two terms:
$$
\P(\|\widetilde B\|_2 \le \eps) \sim C(H) \eps\exp \Big(-\frac 1 8 \eps^{-2}-\beta_1 \eps^{4H-4}-\beta_2 \eps^{8H-6}\Big), \quad \eps\to 0,
$$
with coefficients 
\begin{equation}\label{gamma12}
\begin{aligned}
\beta_1 &=   \frac {1} {\sqrt 2   }  \eta_{0,1}
-      g_{1} \eta_{1,0} -\xi_1\\
\beta_2 &=   \frac {1} {\sqrt 2   }  \eta_{0,2}
-    g_{2} \eta_{2,0}
-      g_{1} \eta_{1,1}-\xi_2.
\end{aligned}
\end{equation}
To find $\xi_1$ and $\xi_2$, note that  
\begin{align*}
 y(r)^{-2} =\, &     \Big(y_0 + \sum_{j=1}^\infty y_j r^{j(2H-1)}\Big)^{-2}= \\
&
 y_0^{-2}-2y_0^{-3}  \sum_{j=1}^\infty y_j r^{j(2H-1)}+3 y_0^{-4} \Big(\sum_{j=1}^\infty y_j r^{j(2H-1)}\Big)^2+...   =\\
&
 y_0^{-2}
-2y_0^{-3}    y_1 r^{2H-1}
+\big(3 y_0^{-4} y_1^2   -2y_0^{-3}    y_2 \big)r^{2(2H-1)}
+O\big(r^{3(2H-1)}\big) 
\end{align*}
which yields
$$
\xi_1 = -2y_0^{-3}y_1  \quad \text{and}\quad \xi_2 = y_0^{-3}\big(3 y_0^{-1} y_1^2   -2     y_2\big).
$$
By \eqref{xieta} with $k=0$, we have 
$$
 y(r)^{-1} =    \Big(y_0 + \sum_{j=1}^\infty y_j r^{j(2H-1)}\Big)^{-1} = 
 y_0^{-1} -y_0^{-2}y_1 r^{2H-1} + O(r^{2(2H-1)})  
$$
and hence 
$$
\eta_{0,0} = y_0^{-1}\quad \text{and}\quad \eta_{0,1} = -y_0^{-2}y_1.
$$
Similarly, for $k=1$
\begin{align*}
 y(r)^{2H-2} =\, &  \Big(y_0 + \sum_{j=1}^\infty y_j r^{j(2H-1)}\Big)^{2H-2}= \\
&
 y_0^{2H-2}-(2-2H) y_0^{2H-3} y_1 r^{2H-1}+O\big(r^{2(2H-1)}\big)   
\end{align*}
which gives  
$$
\eta_{1,0}=y_0^{2H-2}\quad \text{and}\quad \eta_{1,1}=-(2H-2) y_0^{2H-3} y_1.
$$
Finally, \eqref{xieta} with $k=2$ yields $\eta_{2,0}=y_0^{4H-3}$. 
Plugging all these values into \eqref{gamma12} we get
\begin{align*}
\beta_1 &=  2y_0^{-3}y_1  -\frac {1} {\sqrt 2   }   y_0^{-2}y_1
-      g_{1} y_0^{2H-2}\\
\beta_2 &= -y_0^{-3}\big(3 y_0^{-1} y_1^2   +2     y_2\big) -\frac {1} {\sqrt 2   }  y_0^{4H-3}
-     g_{2} y_0^{4H-3} +     g_{1} (2-2H) y_0^{2H-3} y_1,
\end{align*} 
where $g_1$ and $g_2$ are found using \eqref{hkgk}. 
It is left to find $y_1$ and $y_2$. 

For $H\in [\frac 2 3, \frac 3 4)$ we have  
$\big\lfloor \frac 1 2\frac 1 {2H-1}\big\rfloor = 1$ and equation  \eqref{yreq} reads
$$
y/y_0 +    h_1 r^{ 2H-1}y^{ 2H }=1.
$$
Plugging expansion \eqref{yseries} we get
$$
\sum_{j=1}^\infty \frac{y_j}{y_0} r^{j(2H-1)}  +    h_1 r^{ 2H-1}\Big(y_0 + \sum_{j=1}^\infty y_j r^{j(2H-1)}\Big)^{ 2H }=0
$$
where $h_1$ is defined in \eqref{hkgk}. Comparing coefficients of powers $r^{2H-1}$ and $r^{4H-2}$ we obtain 
$$
y_1   = - h_1  y_0^{2H+1} \quad \text{and}\quad 
y_2  = -  2H h_1  y_0^{2H}y_1.
$$

\hfill $\blacksquare$
\end{example}

\subsection{The case $\boldsymbol{H\in}\mathbf{ (0, \frac 1 2)}$}
By Theorem \ref{eig-thm} the eigenvalues satisfy the same formula
$$
\lambda(n) =    \nu_n^{-2}      +    \kappa_\alpha   \nu_n^{ \alpha-3} 
$$
but this time, for $\alpha:=2-2H\in (1,2)$,  with $\nu_n = \pi n -\dfrac  \pi 2 q_\alpha + O(n^{1-\alpha})$ as $n\to\infty$, where 
$$
q_\alpha =1 - \frac{\alpha-1}{2} -\frac 2\pi  \arcsin\frac{\ell_{1-\alpha/2}}{\sqrt{1+\ell_{1-\alpha/2}^2}}.
$$
By the Taylor expansion 
\begin{align*}
\nu_n^{-2} & = \frac 1 {\pi^2} n^{-2} + \frac {q_\alpha}{\pi^2}n^{-3} + O(n^{-\alpha-2})\\
\nu_n^{\alpha-3} & = \frac 1 {\pi^{3-\alpha}} n^{\alpha-3} + \frac {3-\alpha}2\frac {q_\alpha}{\pi^{3-\alpha}}n^{\alpha-4} + O(n^{-3})
\end{align*}
and therefore  
$$
\lambda(n) =   \frac {\kappa_\alpha} {\pi^{3-\alpha}} n^{\alpha-3} + \frac 1 {\pi^2} n^{-2}               
+  \kappa_\alpha\frac {3-\alpha}2\frac {q_\alpha}{\pi^{3-\alpha}}n^{\alpha-4} + O(n^{-3}) := \phi(n)+ O(n^{-3}).
$$
As in the previous case, omitting the residual $O(n^{-3})$ term alters the exact asymptotics of small ball probabilities 
only by a multiplicative constant. The weight  function $\phi(t)$ has the form \eqref{phit} with  
\begin{equation}\label{csds_small}
\begin{aligned}
c_1 = \frac {\kappa_\alpha} {\pi^{3-\alpha}}, &\qquad  d_1 = 3-\alpha\\
c_2 =\frac {1} {\pi^2},&\qquad d_2 = 2\\
c_3 = \frac{\kappa_\alpha}{\pi^{3-\alpha}}\frac {3-\alpha}2 q_\alpha, & \qquad d_3 = 4-\alpha
\end{aligned}
\end{equation}
and the asymptotic expansions \eqref{I0v}, \eqref{I1lim} and \eqref{I2lim} read
\begin{align}
\label{I0lim_u}
I_0(u)   & =
\tfrac {1} 2 \big( 1  - \tfrac {1}2 q_\alpha    \big)\log u 
-\tfrac {3-\alpha} 2          \chi_{1,0}(2c_1)^{\frac 1 {3-\alpha}}u^{\frac 1 {3-\alpha}}
+         \sum_{k=1}^{m_\alpha} g_k
u^{ \frac{1-k(\alpha-1) }{3-\alpha}}
+ O(1)
  \\
\label{I1lim_u}  
I_1(u) & = 
- \tfrac 1 2     \chi_{1,0}  (2c_1 )^{\frac 1 {3-\alpha}}u^{\frac 1 {3-\alpha}}
-    \sum_{k=1}^{m_\alpha} h_k 
 u^{ \frac {1-k(\alpha-1)}{3-\alpha}}
+ O(1)    \\
\label{I2lim_u}
I_2(u)  &  = 
\tfrac 1 2\chi_{3,1}(2c_1u)^{\frac 1 {3-\alpha}}\big(1+o(1)\big) 
\end{align}
where sequences $h_k$ and $g_k$ are defined by the same formulas as in \eqref{hkgk}, evaluated at constants \eqref{csds_small}.

To find a suitable function $u(r)$ satisfying condition \eqref{cond}, consider equation (cf. \eqref{I1lim_u}) 
\begin{equation}\label{equ}
  \tfrac 1 2     \chi_{1,0}  (2c_1)^{\frac 1 {3-\alpha}} u^{\frac 1 {3-\alpha}}
+   \sum_{k=1}^{\left\lfloor \frac 1 2\frac 1 {\alpha-1}\right\rfloor} h_k 
 u^{ \frac {1-k(\alpha-1) }{3-\alpha}}=ur.
\end{equation}
As in the previous case, it has the unique solution $u(r)$ for any $r>0$ and it increases to $+\infty$ as $r\to 0$. 
By the choice of upper limit in the sum in \eqref{equ}, the power of $u(r)$ in the numerator of \eqref{cond} 
does not exceed $\frac 1 2\frac 1{3-\alpha}$ and hence \eqref{cond} holds in view of \eqref{I2lim_u}. 

Define new variable $y$
by the relation $u = (ry)^{-\frac{3-\alpha}{2-\alpha}}$, then it solves equation 
$$
y/y_0 +   \sum_{k=1}^{\left\lfloor \frac 1 2\frac 1 {\alpha-1}\right\rfloor} h_k  r^{ k\frac{ \alpha-1}{2-\alpha}   }y^{1 +k\frac{ \alpha-1}{2-\alpha} }=
1,
$$
where $1/y_0 =  \tfrac 1 2     \chi_{1,0}  (2c_1)^{\frac 1 {3-\alpha}}$. 
The function $r\mapsto y(r)$ is analytic in the vicinity of $r=0$ and can be expanded into powers of the small parameter
$r^{\frac{ \alpha-1}{2-\alpha}}$:
$$
y(r) = y_0 +\sum_{k=1}^\infty y_k r^{k\frac{ \alpha-1}{2-\alpha}}.
$$

Plugging \eqref{I0lim_u} and \eqref{I2lim_u} into \eqref{Pfla} yields 
\begin{align*}
& 
\P(\|\widetilde B\|^2_2\le r) \sim \\
&
    u(r)^{-\frac 1 4-\frac 1 2 \frac 1 {3-\alpha}+\frac {1} 2  ( 1  - \frac {1}2 q_\alpha)}  \exp\Big(
-\tfrac{3-\alpha}{y_0}u(r)^{\frac 1 {3-\alpha}}
+         \sum_{k=1}^{m_\alpha} g_k
u(r)^{ \frac{1-k(\alpha-1) }{3-\alpha}}
+ u(r) r\Big) \sim\\
&
    r^{\gamma_\alpha}  
\exp
\Big(
-\tfrac{3-\alpha}{y_0}(ry)^{-\frac{1}{2-\alpha} }
+         
\sum_{k=1}^{m_\alpha} g_k (ry)^{ -\frac{1-k(\alpha-1)2}{2-\alpha}}
+  (ry)^{-\frac{3-\alpha}{2-\alpha}}r 
\Big) \sim \\
&
   r^{\gamma_\alpha}  
\exp
\Big(
-\tfrac{3-\alpha}{y_0} \sum_{j=0}^\infty \eta_{0,j} r^{ \frac{j (\alpha-1)-1}{2-\alpha}}
+         
\sum_{k=1}^{m_\alpha} \sum_{j=0}^\infty g_k  \eta_{k,j}  r^{\frac{ (j+k)(\alpha-1)-1}{2-\alpha} }
+  \sum_{j=0}^\infty \xi_j r^{\frac{ j(\alpha-1)-1}{2-\alpha}} 
\Big)\sim \\
&
    r^{\gamma_\alpha}  
\exp
\Big(
-\beta_0 r^{-\frac{1}{2-\alpha}}  
 - \sum_{\ell=1}^{m_\alpha}\Big(  \tfrac{3-\alpha}{y_0}\eta_{0,\ell}  
-        
  \sum_{j=0}^{\ell-1} g_{\ell-j}  \eta_{\ell-j,j}      
-    \xi_\ell\Big)  
r^{\frac{ \ell(\alpha-1)-1}{2-\alpha}} 
\Big)
\end{align*}
where we defined 
$$
\gamma_\alpha := -\frac{3-\alpha}{2-\alpha}\Big(-\frac 1 4-\frac 1 2 \frac 1 {3-\alpha}+\frac {1} 2  \big( 1  - \frac {1}2 q_\alpha\big)\Big)
\quad \text{and}\quad
\beta_0 := \tfrac{3-\alpha}{y_0}\eta_{0,0}   -   \xi_0,
$$
and $\xi_j$ and $\eta_{k,j}$ are coefficients in the expansions
$$
y^{ -\frac{1-k(\alpha-1)}{2-\alpha}} =  \sum_{j=0}^\infty \eta_{k,j} r^{j\frac{ \alpha-1}{2-\alpha}}
\quad \text{and}\quad 
 y^{-\frac{3-\alpha}{2-\alpha}} = \sum_{j=0}^\infty \xi_j r^{j\frac{ \alpha-1}{2-\alpha}}.
$$
Replacing $\alpha$ with $2-2H$ and $r:=\eps^2$ and simplifying, we obtain the formula \eqref{Peps}:

% ======= simplification of $\beta_0$ ==========
%
%\cleaprage 
% We have $\xi_0 = y_0^{-\frac{3-\alpha}{2-\alpha}}$ and $\eta_{0,0}=y_0^{-\frac 1 {2-\alpha}}$ and hence 
%$$
%\beta_0 :=  (2-\alpha)y_0^{-\frac{3-\alpha}{2-\alpha}}    
%$$
%Since $c_1 = \frac {\kappa_\alpha} {\pi^{3-\alpha}}$ and
%$$
%\chi_{1,0} = \frac 1 {3-\alpha}\frac \pi {\sin \frac\pi{3-\alpha}}
%$$
%we have 
%$$
%y_0^{-1} =  \tfrac 1 2     \frac 1 {3-\alpha}\frac \pi {\sin \frac\pi{3-\alpha}}  (2\frac {\kappa_\alpha} {\pi^{3-\alpha}})^{\frac 1 {3-\alpha}} =
%   2^{-\frac {2-\alpha} {3-\alpha}}  \frac 1 {3-\alpha}\frac 1 {\sin \frac\pi{3-\alpha}}   \kappa_\alpha^{\frac 1 {3-\alpha}}
%$$
%and consequently 
%$$
%\beta_0 = (1-\tfrac \alpha 2)  
%\Big(  \frac 1 {3-\alpha}\frac 1 {\sin \frac\pi{3-\alpha}}   \Big)^{ \frac{3-\alpha}{2-\alpha}}
%\Big(  \kappa_\alpha^{\frac 1 {3-\alpha}}\Big)^{ \frac{3-\alpha}{2-\alpha}}=
%\frac{H}{(2H+1)^{\frac{2H+1}{2H}}}\left(\frac{\sin (\pi H)\Gamma(2H+1)}{\left(\sin \frac \pi {2H+1}\right)^{2H+1}}\right)^{\frac 1 {2H}}
%$$

% ====== simplification of $\gamma_\alpha$ ========
%\clearpage 
%\begin{align*}
%\gamma_\alpha = &
% \frac{1}{2-\alpha}\Big(\frac {3-\alpha} 4+\frac 1 2  -\frac {3-\alpha} 2  \big( 1  - \frac {1}2 q_\alpha\big)\Big) =\\
%&
% \frac{1}{2H}\Big(\frac {2H+1} 4+\frac 1 2  -\frac {2H+1} 2  \Big(    
%\frac {1}2 +  \frac{1-2H}{4}+  \frac 1\pi  \arcsin\frac{\ell_H}{\sqrt{1+\ell_H^2}}\Big)
%\Big)  =\\
%&
% \frac{1}{4H}\Big( \frac 3 4       
%+  H^2 -\frac {2H+1} {\pi}      \arcsin\frac{\ell_H}{\sqrt{1+\ell_H^2}} 
%\Big)
%\end{align*}
%which yields \eqref{gammaH} upon replacing $r:=\eps^2$. 

\begin{prop}\label{main-thm-smallH}
For $H\in (0,\frac 1 2)$ let $h_k$ and $g_k$ be the real sequences, defined by formulas \eqref{hkgk}, 
evaluated at the constants \eqref{csds_small}.  Then for any $r>0$ the equation 
$$
  y/y_0
+   \sum_{k=1}^{\left\lfloor \frac 1 2\frac 1 {1-2H}\right\rfloor} h_k  r^{ k\frac{ 1-2H}{2H}   }y^{k\frac{ 1-2H}{2H} +1}=
1
$$
with 
$$
y_0  = (2H+1) 
   \left(\frac{2^{ 2H }\left(\sin \frac \pi {2H+1}\right)^{2H+1}}{\sin (\pi H)\Gamma(2H+1)}\right)^{\frac 1 {2H+1}}
$$
has unique positive root $y(r)$, which can be expaned into series  
$$
y(r) = y_0 +\sum_{k=1}^\infty y_k r^{k\frac{1-2H}{2H}}
$$
convergent for all $r$ small enough.  Let $\xi_j$ and $\eta_{k,j}$ be coefficients of the power expansions  
$$
y(r)^{-\frac{2H+1}{2H}} =  \sum_{j=0}^\infty \xi_j r^{j\frac{  1-2H }{2H}}
\quad \text{and}\quad 
y(r)^{  \frac{k(1-2H)-1}{2H}} =  \sum_{j=0}^\infty \eta_{k,j} r^{j\frac{1-2H}{2H}}
$$
and define 
$$
\beta_\ell(H) :=    \frac{2H+1}{y_0}\eta_{0,\ell}  - \sum_{j=0}^{\ell-1} g_{\ell-j}  \eta_{\ell-j,j}  -  \xi_\ell.
$$
Then 
$$
\P\big(\|\widetilde B\|_2   \le \eps\big)   \sim      \eps^{\gamma(H)}\exp\bigg(
-\beta_0(H) \eps^{-\frac 1 {H}} - \sum_{\ell=1}^{\big\lfloor \frac 1 {1-2H}\big\rfloor}\beta_{\ell}(H)  \eps^{ \frac{ \ell(1-2H)-1}{ H}} 
 \bigg), \quad \eps\to 0
$$
where $\gamma(H)$ and $\beta_0(H)$ are given by  \eqref{gammaH} and \eqref{beta0H}.
\end{prop}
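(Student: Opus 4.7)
The plan is to mirror the argument that established Proposition~\ref{main-thm-largeH}, now in the regime $\alpha := 2-2H \in (1,2)$ in which the fractional component dominates the spectrum. I would first apply Theorem~\ref{eig-thm} to write the eigenvalues as $\lambda(n) = \widetilde\nu_n^{-2} + \kappa_\alpha\,\widetilde\nu_n^{\alpha-3}$ with $\widetilde\nu_n = \pi n - \tfrac{\pi}{2}q_\alpha + O(n^{1-\alpha})$, and expand each term by Taylor's formula through the order $O(n^{-3})$ to produce a three-term approximant $\phi(n)$ of the form \eqref{phit} with the coefficients and exponents listed in \eqref{csds_small}. Since the residual is summable, Li's comparison theorem reduces the problem, up to a multiplicative constant, to computing the small-ball asymptotics of the weighted $\chi^2_1$-series with weight $\phi$, to which Corollary~\ref{cor} applies.

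The functionals $I_0, I_1, I_2$ are then evaluated from the generic expansions \eqref{I0v}--\eqref{I2lim} specialized to the constants \eqref{csds_small}; the substitution $2uc_1 = v^{-d_1}$ with $d_1 = 3-\alpha$ brings in the coefficients $h_k, g_k$ of \eqref{hkgk} and yields \eqref{I0lim_u}--\eqref{I2lim_u}. I would then define $u(r)$ as the unique positive solution of the truncated equation \eqref{equ}; the cutoff index $\lfloor \tfrac{1}{2}(\alpha-1)^{-1}\rfloor$ is dictated by condition \eqref{cond}, since in view of the growth $I_2(u)^{1/2}\asymp u^{1/(2(3-\alpha))}$ only the terms of order $u^{s}$ with $s>\tfrac{1}{2(3-\alpha)}$ must be kept alongside $ur$. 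Monotonicity of the left-hand side of \eqref{equ} divided by $u$ guarantees uniqueness and gives $u(r)\to\infty$ as $r\to 0$.

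The decisive step is the substitution $u = (ry)^{-(3-\alpha)/(2-\alpha)}$, which converts \eqref{equ} into an analytic equation in the small parameter $r^{(\alpha-1)/(2-\alpha)}$; the analytic implicit function theorem then furnishes the convergent series $y(r) = y_0 + \sum_k y_k r^{k(\alpha-1)/(2-\alpha)}$. The leading coefficient satisfies $1/y_0 = \tfrac{1}{2}\chi_{1,0}(2c_1)^{1/(3-\alpha)}$ and, after using \eqref{chi1k} and \eqref{csds_small}, reduces to the closed form stated in the proposition. Inserting the induced power series for $y(r)^{-(3-\alpha)/(2-\alpha)}$ and $y(r)^{(k(1-2H)-1)/(2H)}$ into \eqref{Pfla}, grouping exponents of $r$, and separating the polynomial prefactor from the exponential gives the announced formula after setting $\alpha = 2-2H$ and $r = \eps^2$.

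The main obstacle is the bookkeeping of exponents: one must verify that the bound $\lfloor 1/(1-2H)\rfloor$ captures precisely those $\ell$ for which $\ell(1-2H) - 1 < 0$, so that all larger indices contribute only bounded amounts absorbed into the $\sim$ sign. Equally delicate is the identification of the prefactor: the power $\gamma_\alpha = -\tfrac{3-\alpha}{2-\alpha}\bigl(-\tfrac{1}{4} - \tfrac{1}{2(3-\alpha)} + \tfrac{1}{2}(1 - \tfrac{1}{2}q_\alpha)\bigr)$ must be reduced to $\gamma(H)$ from \eqref{gammaH} using the explicit form of $q_\alpha$, and the leading exponential constant $\beta_0(H) = \tfrac{3-\alpha}{y_0}\eta_{0,0} - \xi_0$ must be recognized as the expression \eqref{beta0H}; both involve algebraic manipulation of gamma-function ratios but no conceptually new ideas beyond those used in the $H > \tfrac{1}{2}$ case.
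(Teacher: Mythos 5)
Your proposal follows essentially the same route as the paper's own proof: Theorem~\ref{eig-thm} plus Taylor expansion to get the three-term weight \eqref{csds_small}, Li's comparison theorem, the expansions \eqref{I0lim_u}--\eqref{I2lim_u}, the truncated equation \eqref{equ} with cutoff justified via \eqref{cond} and \eqref{I2lim_u}, the substitution $u=(ry)^{-\frac{3-\alpha}{2-\alpha}}$ with the analytic expansion of $y(r)$, and the final identification of $\gamma_\alpha$ and $\beta_0$ with \eqref{gammaH} and \eqref{beta0H}. The steps and their justifications match the paper's argument, so the proposal is correct.
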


%\bibliographystyle{plain}
%\bibliography{/Users/Pavel/Dropbox/Pasha_Masha/bibliography/fBm}

\def\cprime{$'$} \def\cprime{$'$} \def\cydot{\leavevmode\raise.4ex\hbox{.}}
  \def\cprime{$'$} \def\cprime{$'$} \def\cprime{$'$}

\end{document}